\def\T{{\mathcal{T}}}
\def\chic{{\chi_\mathrm{c}}}
\let\oldBox\Box
\def\Box{\mathbin{\oldBox}}
\title[Hedetniemi's conjecture and adjoint functors]{Hedetniemi's conjecture and adjoint functors in thin categories}
\author[J. Foniok]{Jan Foniok}
\author[C. Tardif]{Claude Tardif}
\address{Manchester Metropolitan University \\
School of Computing, Mathematics and Digital Technology \\
John Dalton Building \\
Chester Street \\
Manchester M1 5GD \\
England}
\email{j.foniok@mmu.ac.uk}
\address{Royal Military College of Canada \\ 
PO Box 17000 Station ``Forces'' \\
Kingston, Ontario\\ 
Canada, K7K 7B4 } 
\email{Claude.Tardif@rmc.ca}
\urladdr{http://www.rmc.ca/academic/math\underline{\ }cs/tardif/}
\keywords{graph products, adjoint functors, chromatic number} 
\subjclass[2010]{05C15, 18B35}
\newtheorem{lemma}{Lemma}[section]
\newtheorem{theorem}[lemma]{Theorem}
\newtheorem{corollary}[lemma]{Corollary}
\newtheorem{prop}[lemma]{Proposition}
\newtheorem{conjecture}[lemma]{Conjecture}
\theoremstyle{definition}
\newtheorem{problem}[lemma]{Problem}
\newcommand{\modheq}{/{\leftrightarrow}}
\newfont{\Bb}{msbm10 scaled\magstep1}
\begin{document}


\begin{abstract}
We survey results on Hedetniemi's conjecture which are connected
to adjoint functors in the ``thin'' category of graphs, and expose
the obstacles to extending these results.
\end{abstract}

\date{9 August, 2016}
\maketitle

\section{Introduction and terminology}
Hedetniemi's conjecture states that the chromatic number
of the product of two graphs is equal to the minimum of the chromatic 
numbers of the factors. This conjecture has caught the
attention of some category theorists, who recognize their turf
in its setting of morphisms and products in Cartesian closed 
categories. Yet, the input from category theorists has not gone much beyond 
reformulations of the problem and rediscovery of basic results.

In our opinion there is potential for deeper input.
However, we will argue that the consideration of the
graph-theoretic product in terms of its categorial
definition is perhaps the wrong approach. The most
relevant property of the product seems to be that 
it is preserved by right adjoint functors. Indeed, 
significant results have been proved using this fact.

The purpose of this paper is to expose this point of view
in greater detail. However for this purpose,
we will need to escape the usual category of graphs and homomorphisms and
work instead in thin categories.

\subsection{Graphs, digraphs, homomorphisms and chromatic numbers}
\label{ss:gdhcn}

It will be useful to consider both finite graphs and finite digraphs. 
A {\em digraph}~$G$ consists of the following data:
1.~a finite set~$V(G)$ of {\em vertices};
2.~a finite set~$A(G)$ of {\em arcs};
3.~two maps $t_G,h_G:A(G)\to V(G)$ that determine the {\em tail} and the {\em head}
of each arc.
For $a\in A(G)$, we say that it is an arc from $t_G(a)$ to $h_G(a)$
and write $a:t_G(a)\to h_G(a)$.

A digraph is {\em simple} if for any $u,v\in V(G)$ there is at most one arc from $u$ to~$v$.
For simple digraphs it is common for an arc $u\to v$ to be called $(u,v)$,
and so $A(G)$ is usually viewed as a binary relation on~$V(G)$.
When $A(G)$ is symmetric, $G$ is called a {\em graph};
an {\em edge} of $G$ is a pair $\{u, v\}$ with $(u,v), (v,u) \in A(G)$.

A {\em homomorphism}~$f$ of a digraph~$G$ to a digraph~$H$ is a pair of maps $(f_0,f_1)$,
$f_0:V(G)\to V(H)$, $f_1:A(G)\to A(H)$,
such that both of the following diagrams commute:
\[
  \xymatrix{
    A(G) \ar[r]^{f_1} \ar[d]^{t_G}      & A(H) \ar[d]^{t_H} \\
    V(G) \ar[r]^{f_0}                   & V(H) 
  }
  \qquad
  \xymatrix{
    A(G) \ar[r]^{f_1} \ar[d]^{h_G}      & A(H) \ar[d]^{h_H} \\
    V(G) \ar[r]^{f_0}                   & V(H) 
  }
\]
Homomorphisms are composed component-wise.
Where $H$ is simple, $f$~is usually identified with~$f_0$ because
$f_1$~is uniquely determined by it.

The {\em product} $G\times H$ of two digraphs is given by
\begin{align*}
  & V(G\times H) = V(G) \times V(H), \\
  & A(G\times H) = A(G) \times A(H), \\
  & t_{G\times H} = (t_G, t_H) \text{ and } h_{G\times H} = (h_G, h_H).
\end{align*}
This construction, along with the obvious projections,
is the product in the category of digraphs and their homomorphisms.
Note that symmetry is preserved by the product, that is, the product of two graphs 
is a graph. 

The {\em complete graph} $K_n$ on $n$ vertices is the (simple) graph defined by
\begin{eqnarray*}
V(K_n) & = & \{0, 1, \ldots, n-1\}, \\
A(K_n) & = & \{ (i,j) : i \neq j \}.
\end{eqnarray*}
An {\em $n$-colouring} of a digraph $G$ is a homomorphism of $G$ to $K_n$.
The {\em chromatic number} $\chi(G)$ of $G$ is the least $n$ such that
$G$ admits an $n$-colouring. Note that a digraph $G$ can have {\em loops}, that is,
arcs of the form $u\to u$. In this case, $G$ has no $n$-colourings for any $n$, and 
we define $\chi(G)=\infty$.

\subsection{Hedetniemi's conjecture and related questions}

Hedetniemi's conjecture states the following:

\begin{conjecture}[\cite{Hed:Con}] \label{hc}
If $G$ and $H$ are graphs, then 
$$\chi(G\times H) = \min \{ \chi(G), \chi(H) \}.$$
\end{conjecture}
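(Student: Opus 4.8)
The plan is to separate the two inequalities, since they are of completely different character. The inequality $\chi(G\times H)\le\min\{\chi(G),\chi(H)\}$ is immediate, and I would dispose of it first. The product is equipped with projection homomorphisms $\pi_G\colon G\times H\to G$ and $\pi_H\colon G\times H\to H$. If $\chi(G)=n$, there is a colouring $c\colon G\to K_n$, and then $c\circ\pi_G$ is a homomorphism $G\times H\to K_n$, whence $\chi(G\times H)\le\chi(G)$; symmetrically $\chi(G\times H)\le\chi(H)$. (If a factor has a loop both sides are $\infty$ and there is nothing to check.) All the content therefore lies in the reverse inequality $\chi(G\times H)\ge\min\{\chi(G),\chi(H)\}$.

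For the reverse inequality I would exploit exactly the feature the paper advertises: the functor $-\times H$ has a right adjoint. Write $K_n^H$ for the exponential graph, with vertex set $V(K_n)^{V(H)}$ (all maps $V(H)\to\{0,\dots,n-1\}$) and an arc $\varphi\to\psi$ whenever $\varphi(u)\ne\psi(v)$ for every arc $u\to v$ of $H$. The adjunction supplies a natural bijection between homomorphisms $G\times H\to K_n$ and homomorphisms $G\to K_n^H$; in particular $\chi(G\times H)\le n$ if and only if $G\to K_n^H$. Fix the threshold $n=\min\{\chi(G),\chi(H)\}-1$, so that neither factor is $n$-colourable, and note that $H$ not being $n$-colourable means $K_n^H$ carries no loop (a loop at $\varphi$ would be a proper $n$-colouring of $H$). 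I must show $G\times H\notto K_n$, equivalently $G\notto K_n^H$. The natural reduction is this: if $\chi(K_n^H)\le n$ whenever $\chi(H)>n$, then any homomorphism $G\to K_n^H$ composes with a colouring $K_n^H\to K_n$ to give $G\to K_n$, contradicting $\chi(G)>n$; and conversely, taking $G=K_n^H$ with the identity map shows that this colouring statement is not merely sufficient but \emph{equivalent} to the conjecture at the threshold $n$. So the whole problem reduces to proving, for every $H$ with $\chi(H)>n$, that the exponential graph $K_n^H$ is $n$-colourable.

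The hard part is precisely this last statement, and I do not expect it to yield to a routine argument. The coordinatewise and constant colourings embed a copy of $K_n$ \emph{into} $K_n^H$ but provide no map in the required direction, so one needs a genuinely global $n$-colouring of an exponentially large graph whose structure encodes all the ways $H$ fails to be $n$-colourable. The cases I would expect to push through are small $n$: the argument of El-Zahar and Sauer colours $K_3^H$ with three colours when $H$ is not $3$-colourable, settling $n=3$, and related techniques handle fractional and circular relaxations. For general $n$ the colouring of $K_n^H$ remains the obstruction, and it is exactly here that the adjoint-functor viewpoint, while clarifying the equivalence and organizing the reduction, stops short of delivering a proof.
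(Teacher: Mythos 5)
This statement is Hedetniemi's conjecture itself: the paper labels it as a \emph{conjecture} and offers no proof of it, so there is no ``paper's own proof'' to compare against. What you have written is not a proof either, and you say as much yourself. Your first paragraph correctly disposes of the easy inequality $\chi(G\times H)\le\min\{\chi(G),\chi(H)\}$ via the projections, exactly as the paper does when it remarks that this direction ``follows immediately from definitions.'' Your second paragraph reproduces, essentially verbatim, the paper's own reduction in Section~2: $G\times H\to K_n$ iff $G\to K_n^H$ by adjunction, the observation that $K_n^H$ has a loop precisely when $\chi(H)\le n$, and the equivalence of the conjecture with the statement that $\chi(K_n^H)\le n$ whenever $\chi(H)>n$. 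All of that is correct, but it is a reformulation, not an argument.

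The genuine gap is therefore the entire content of the conjecture: you never establish that $K_n^H$ is $n$-colourable when $\chi(H)>n$ for any $n$ beyond the El-Zahar--Sauer case, and no technique in the paper (arc graphs, Pultr adjoints, the $\Omega_{\T(3)}$ functor) closes it --- indeed the paper's point is precisely that these methods stall at the circular complete graphs $K_{s/r}$ with $\frac{s}{r}<4$. You should also be aware that the gap is not merely hard but unfillable: Shitov (2019) subsequently exhibited graphs with $\chi(G\times H)<\min\{\chi(G),\chi(H)\}$, so the reverse inequality is false in general and the statement cannot be proved. Your write-up is a correct and honest account of the reduction and of where it stops, but it should not be presented as a proof of the statement.
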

The inequality $\chi(G\times H) \leq \min \{ \chi(G), \chi(H) \}$ follows
immediately from definitions, so Hedetniemi's conjecture is equivalent
to the statement that $\chi(G\times H) \geq \min \{ \chi(G), \chi(H) \}$.
Though no counterexamples have been found in fifty years, there is no
categorial reason for Hedetniemi's conjecture to hold, since ``arrows
go the wrong way''. In fact, Hedetniemi's conjecture  fails for digraphs, 
with relatively small examples (see Section~\ref{ag}). 
The following much weaker version of Hedetniemi's conjecture will 
be relevant to our discussion:
\begin{conjecture}[The weak Hedetniemi conjecture] \label{whc}
For every integer $n$, there exists an integer $f(n)$ such that 
if $G$ and $H$ are graphs with chromatic number at least $f(n)$, 
then $$\chi(G\times H) \geq n.$$
\end{conjecture}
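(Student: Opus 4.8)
The plan is to prove the statement by induction on $n$, using the arc-graph functor $\delta$ to amplify chromatic lower bounds while carrying them across the product. Recall that $\delta$ sends a digraph $G$ to the digraph on vertex set $A(G)$ with an arc from $a$ to $b$ whenever $h_G(a)=t_G(b)$. Two facts drive the argument. First, $\delta$ is a right adjoint, $\deltaL\adjoint\delta$, and therefore preserves products: $\delta(G\times H)=\delta G\times\delta H$. (Directly from the definitions, an arc of either side is a pair $((a,a'),(b,b'))$ with $h_G(a)=t_G(b)$ and $h_H(a')=t_H(b')$.) Second, there is the classical relation of Harner--Entringer and Poljak--R\"odl,
\[ \chi(\delta G)=\min\{\, r : \binom{r}{\lfloor r/2\rfloor}\ge\chi(G)\,\}, \]
so $\chi(\delta G)$ grows like $\log_2\chi(G)$, a strictly increasing and unbounded function of $\chi(G)$. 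Since $\delta$ turns a graph into a genuine (non-symmetric) digraph, the whole induction must be carried out in the category of digraphs.

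For the inductive step, assume the statement at level $m$, with threshold $f(m)$, and let $G,H$ be digraphs with $\chi(G),\chi(H)\ge N$. If $N$ is large enough that the least $r$ with $\binom{r}{\lfloor r/2\rfloor}\ge N$ is at least $f(m)$, then $\chi(\delta G),\chi(\delta H)\ge f(m)$, so the inductive hypothesis gives $\chi(\delta G\times\delta H)\ge m$. Product preservation turns this into $\chi(\delta(G\times H))\ge m$, and inverting the relation above yields $\chi(G\times H)\ge\binom{m-1}{\lfloor(m-1)/2\rfloor}+1=:\psi(m)$. Thus the statement holds at level $\psi(m)$ with threshold $f(\psi(m))=N$. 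The statement is clearly downward closed in $n$, and $\psi(m)>m$ once $m\ge 5$ (whereas $\psi(m)=m$ for $m\le 4$), so iterating the step from a base level $m_0\ge 5$ produces a strictly increasing, unbounded sequence of levels and hence reaches every $n$. Note also that the digraphs fed into the induction are always arc graphs, and the class of arc graphs is closed under both $\delta$ and products; so it would suffice to run the induction --- base case included --- inside this subcategory.

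Everything therefore reduces to a single base case: some $m_0\ge 5$ at which the statement holds (for arc graphs). This is where I expect the real difficulty, and it is a double obstruction. For ordinary graphs the low levels are free --- multiplicativity of $K_2$ (that $G\times H$ is bipartite exactly when $G$ or $H$ is) gives level $3$, and the El-Zahar--Sauer theorem on $K_3$ gives level $4$ --- but these are exactly the levels at which $\psi$ has a fixed point, so they give no traction; the first \emph{useful} level is $5$, close to the multiplicativity of $K_4$, which is open even for graphs. Worse, the inductive step has pushed us irrevocably into digraphs via $\delta$, and as the paper notes Hedetniemi's conjecture is \emph{false} for digraphs, so there is no reason for any base case to survive the passage through $\delta$. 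The crux is precisely this clash of categories: the product-preservation that powers the amplification is a feature of digraphs, while the multiplicativity that would anchor the induction is a feature of graphs. Bridging it --- by establishing a usable base case for the special digraphs (arc graphs) that actually occur, or by finding product-preserving functors that keep the whole construction inside the graph category --- is the obstacle, and its persistence is exactly why the weak Hedetniemi conjecture remains open.
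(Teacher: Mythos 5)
This statement is a \emph{conjecture} in the paper --- the weak Hedetniemi conjecture, labelled Conjecture~\ref{whc} --- and the paper offers no proof of it; indeed the whole of Section~\ref{ag} is devoted to explaining why it is open. Your proposal is therefore not, and cannot be checked as, a proof: it is a reduction with an explicitly unfilled base case, and that base case is equivalent to the open problem. Concretely, your amplification step (apply $\delta$, use that it preserves products, use the Harner--Entringer bounds to transfer chromatic information back and forth) is exactly the argument the paper runs in the proof of Proposition~\ref{bfdpr}, just read in the contrapositive direction: the paper phrases it as ``if the Poljak--R\"odl function $\psi$ is bounded, its least upper bound is at most $4$,'' and your fixed-point analysis of $\psi(m)=\binom{m-1}{\lfloor(m-1)/2\rfloor}+1$ (fixed for $m\le 4$, strictly increasing for $m\ge 5$) is the same computation that produces the paper's threshold of $4$. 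So the step is sound, and your diagnosis of where it dies is accurate and matches the paper's: levels $3$ and $4$ come for free from the multiplicativity of $K_2$ and $K_3$ but are fixed points of the amplification, level $5$ is out of reach, and the passage to digraphs destroys the multiplicativity results one would want to use (the paper notes $K_n$ is not multiplicative in $\mathcal{D}$ for any $n\ge 3$).

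Two smaller cautions. First, the exact formula $\chi(\delta G)=\min\{r:\binom{r}{\lfloor r/2\rfloor}\ge\chi(G)\}$ is a Sperner-type statement that the paper invokes only for \emph{undirected} $G$ (in the proof of Proposition~\ref{mcg}); after one application of $\delta$ you are working with genuine digraphs, where only the two one-sided bounds of Proposition~\ref{bocd} are available. Your induction in fact only uses those two bounds, so nothing breaks, but you should not state the equality for iterated arc graphs. Second, your closing suggestion that it ``would suffice'' to establish a base case for the subcategory of arc graphs is a genuine observation but does not change the status of the problem: no such base case is known, and the paper's Problems in Section~\ref{ag} are precisely invitations to find functors or arguments that would supply one. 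As written, your text is a correct account of why the conjecture is hard, not a proof of it.
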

We will also be interested in the characterization
of so-called ``multiplicative graphs''. 
A graph $K$ is called {\em multiplicative}
if the existence of a homomorphism of a product $G \times H$
to $K$ implies the existence of a homomorphism of a factor
$G$ or $H$ to $K$. Hence Hedetniemi's conjecture is equivalent
to the statement that the complete graphs are multiplicative.
However, few graphs have yet been shown to be multiplicative.

\subsection{Structure of the paper}

The paper is structured as follows. In Section~\ref{af}
we present the thin categories of graphs and digraphs, and
functors relevant to our discussion. In
Section~\ref{ag} we give an exposition of some of the strange results on 
Conjecture~\ref{whc} obtained using such functors.
In Section~\ref{pf} we see how attempts to improve
on the previous results have led to proving the
multiplicativity of some ``circular complete graphs''; 
we then discuss apparent limitations of the method of 
adjoint functors towards Hedetniemi's conjecture. 
We provide proof of the results which we can prove using
properties of adjoint functors, and we propose open problems 
throughout the paper.

\section{Adjoint functors in the thin categories of graphs and digraphs} \label{af}

\subsection{Thin categories}

In the context of colourings and Hedetniemi's conjecture, the
existence or non-existence of a homomorphism between two graphs or
digraphs seems to be more relevant than the structure of homomorphisms
between them.
Hence we focus our study on thin categories.
A \emph{thin category} is a category in which there is at most one
morphism between any two objects.
The \emph{thin category of digraphs (resp.\ graphs)} is the category whose
objects are digraphs (resp.\ graphs) and where there is a unique morphism
from~$G$ to~$H$ if there exists a homomorphism of~$G$ to~$H$.

If a homomorphism of~$G$ to~$H$ exists,  we write $G \rightarrow H$;
we write $G \not \rightarrow H$ if it does not.
Thus $\rightarrow$ is a binary relation on the
class $\mathcal{D}$ of finite digraphs or on its subclass $\mathcal{G}$
of finite graphs.
More precisely, $\rightarrow$ is a preorder on $\mathcal{G}$ and 
on $\mathcal{D}$, and Hedetniemi's conjecture can be naturally
stated in terms of the quotient order $\mathcal{G}\modheq$.
The relation $\leftrightarrow$ is naturally defined
by $G \leftrightarrow H$ if $G\rightarrow H$ and 
$H \rightarrow G$; $G$ and $H$ are then called {\em homomorphically equivalent}.
Note that $G$ and $H$ are homomorphically equivalent if and only if
they are isomorphic in the thin category.

Both $\mathcal{G}\modheq$ and $\mathcal{D}\modheq$
are distributive lattices with 
$(G\modheq) \wedge (H\modheq) = (G \times H)\modheq$.
(In fact, both $\mathcal{G}\modheq$ and $\mathcal{D}\modheq$
are Heyting algebras because the categories of graphs and digraphs are Cartesian closed.)
An element~$K$ of~$\mathcal{G}$ (resp.\ $\mathcal{D}$) is multiplicative if and only if
$K\modheq$ is meet-irreducible in  $\mathcal{G}\modheq$
(resp.\ $\mathcal{D}\modheq$).
Thus Hedetniemi's conjecture is equivalent to the statement that
for every complete graph~$K_n$, $K_n\modheq$ is meet-irreducible 
in~$\mathcal{G}\modheq$. This reformulation does not
simplify matters since the structure of $\mathcal{G}\modheq$
is complex, and in particular, dense above $K_2$ (see \cite{HelNes:GrH}).

In thin categories, adjunction comes down to an equivalence
between two existential statements.
Specifically, two functors $L, R$ between thin categories ($\mathcal{G}$ or $\mathcal{D}$)
are respectively left and right adjoints of each other if the following 
property holds:
\begin{eqnarray*}
L(G) \rightarrow H & \Leftrightarrow & G \rightarrow R(H).
\end{eqnarray*}
This condition is weaker than adjunction in the usual categories
with all homomorphisms.
Thus all adjoint pairs in these categories (which we present next)
are also adjoint pairs in the thin categories,
but there are other thin adjoints as we shall see in~\ref{ss:racpf}.

\subsection{Pultr template and functors}

The functors relevant to our discussion are connected to 
those introduced by Pultr in \cite{Pul:The-right-adjoints}.
Pultr worked, among others, in the usual categories of graphs and digraphs
(as defined in~\ref{ss:gdhcn}).
He characterized the adjoint functors in these categories by means
of the following construction.
\begin{itemize}
\item[(i)] A {\em Pultr template} is a quadruple $\T = (P,Q,\epsilon_1,\epsilon_2)$
where $P$, $Q$ are digraphs and $\epsilon_1, \epsilon_2$ homomorphisms of $P$ to $Q$.
\item[(ii)] Given a Pultr template $\T = (P,Q,\epsilon_1,\epsilon_2)$,
the \emph{left Pultr functor}~$\Lambda_{\T}$ 
is the following construction:
For a digraph $G$, $\Lambda_{\T}(G)$ contains one copy $P_u$ of $P$
for every vertex $u$ of $G$, and for every arc $a:u\to v$ of~$G$, 
$\Lambda_{\T}(G)$ contains a copy $Q_{a}$ of $Q$ with $\epsilon_1(P)$ identified
with $P_u$ and $\epsilon_2(P)$ identified with~$P_v$.
\item[(iii)] Given a Pultr template $\T = (P,Q,\epsilon_1,\epsilon_2)$
the \emph{central Pultr functor} $\Gamma_{\T}$ 
is the following construction:
For a digraph $H$, the vertices of $\Gamma_{\T}(H)$ are the homomorphisms
$f : P \rightarrow H$, and the edges of $\Gamma_{\T}(H)$ are the homomorphisms
$g : Q \rightarrow H$; such $g$~is an arc from
$f_1 = g \circ \epsilon_1$ to $f_2 = g \circ \epsilon_2$.
\end{itemize}

For any Pultr template $\T$, $\Lambda_{\T}: \mathcal{D} \rightarrow \mathcal{D}$ and 
$\Gamma_{\T}: \mathcal{D} \rightarrow \mathcal{D}$
are respectively left and right adjoints of each other (see~\cite{Pul:The-right-adjoints,FonTar:Adjoint}). 
We use the adjective ``central'' rather
than ``right'' for $\Gamma_{\T}$ because in some important cases, $\Gamma_{\T}$
itself admits a right adjoint in the thin category. 

The right adjoints into~$\mathcal{G}$ need an additional property of the Pultr template:
both $P$ and~$Q$ must be graphs, and moreover,
$Q$~must admit an automorphism~$q$ with $q \circ \epsilon_1 = \epsilon_2$ and 
$q \circ \epsilon_2 = \epsilon_1$.
The existence of such an automorphism
makes the conditions in (ii) and (iii) symmetric, so that $\Lambda_{\T}(G)$
is well defined, and $\Gamma_{\T}(H)$ is a graph rather than a digraph.
(For the adjunction to work in the ``non-thin'' category of graphs and homomorphisms,
we need to allow graphs with multiple edges.)
 
We now give a simple example of a Pultr template, which will be useful
later on.
We let $\T(3) =(K_1,P_3,\epsilon_1,\epsilon_2)$, where $P_3$ is the path with 
three edges and $\epsilon_1, \epsilon_2$ map $K_1$ to the endpoints of~$P_3$.  
Then $\Lambda_{\T(3)}(G)$ is obtained from $G$ by replacing each edge
by a path with three edges, and $\Gamma_{\T(3)}(H)$~is obtained from~$H$ by putting 
edges between vertices joined by a walk of length~$3$ in~$H$. In particular,
if $H$ is the cycle with five vertices, then $\Gamma_{\T(3)}(H)$
is the complete graph~$K_5$. 
The adjunction between $\Lambda_{\T(3)}$ and $\Gamma_{\T(3)}$ then reads as
follows.
\begin{quote}
For any graph $G$, $G$ is 5-colourable if and only if $\Lambda_{\T(3)}(G)$ admits
a homomorphism to the five-cycle. 
\end{quote}
This is a ``complexity reduction'' of the problem of determining whether a graph
is 5-colourable to the problem of determining whether a graph admits a homomorphism
to the five-cycle. Since the former problem is NP-complete, so is the latter.
Using an arsenal of similar reductions, 
Hell and Ne\v{s}et\v{r}il~\cite{HelNes:Dicho} eventually proved that for any
non-bipartite graph $H$ without loops, the problem of determining whether a graph admits a 
homomorphism to $H$ is NP-complete.

\subsection{``Graph products'' and variants on Hedetniemi's conjecture}

Other examples of Pultr templates and functors are connected to the
 various ``products'' used in graph theory. Indeed there are many useful
ways to define an edge set on the Cartesian product $V(G) \times V(H)$
of the vertex sets of simple graphs $G$ and~$H$. We consider the following examples
from \cite{ImrKla:GrP}.
\begin{itemize}
\item The {\em direct product} $G\times H$ is the product
we introduced earlier: it
has edges $\{(u_1,u_2),(v_1,v_2)\}$
such that $\{u_1,v_1\}$ is an edge of $G$  
and $\{u_2,v_2\}$ is an edge of $H$,
\item  The {\em Cartesian product} $G\Box H$ 
has edges $\{(u_1,u_2),(v_1,v_2)\}$
whenever $\{u_1,v_1\}$ is an edge of $G$ and $u_2 = v_2$, or  
$u_1 = v_1$ and $\{u_2,v_2\}$ is an edge of $H$,
\item The {\em lexicographic product} $G\circ H$ 
has edges $\{(u_1,u_2),(v_1,v_2)\}$
such that $\{u_1,v_1\}$ is an edge of $G$ and $u_2, v_2$ are arbitrary, 
or $u_1 = v_1$ and $\{u_2,v_2\}$ is an edge of $H$.
\end{itemize}
For a fixed graph $H$ and $\star \in \{ \times, \Box, \circ\}$, let
$\T(\star, H)$ be the Pultr template 
$(H \star K_1, H \star K_2, \epsilon_1,\epsilon_2)$
where $\epsilon_1(u) = (u,0)$ and $\epsilon_2(u) = (u,1)$.
Then for any graph $G$, $\Lambda_{\T(\star, H)}(G) = G \star H$.
Therefore any product by a fixed graph is a left adjoint, and the equivalence
$$G \star H \rightarrow K \Leftrightarrow G \rightarrow \Gamma_{\T(\star, H)}(K)$$
holds. In particular, $\Gamma_{\T(\times, H)}(K)$ is the exponential graph 
usually denoted $K^H$, which is the exponential object in the category~$\mathcal{G}$.

Exponential graphs connect to Hedetniemi's conjecture as follows: 
We have $G \times H \rightarrow K_n$
if and only if $G \rightarrow  \Gamma_{\T(\times, H)}(K_n) = K_n^H$.
If $\chi(H) \leq n$, any homomorphism $f: H \rightarrow K_n$ corresponds to a
loop in~$K_n^H$, so the condition $G \rightarrow  K_n^H$ is trivially satisfied.
Let $\chi(H) > n$.
Then \[\chi(K_n^H) = \max\{ \chi(G) : G\times H \to K_n \} \]
because  $\chi(G) \leq \chi(K_n^H)$ whenever $G \to K_n^H$,
that is, whenever $G\times H \to K_n$.
Thus, Hedetniemi's conjecture is equivalent to the statement that
\[ \chi(K_n^H) \leq n \text{ whenever } \chi(H) > n. \]

Similar observations hold in the case of the other graph products:
We have $G \Box H \rightarrow K_n$
if and only if $G \rightarrow  \Gamma_{\T(\Box, H)}(K_n)$.
When $\chi(H) > n$, $\Gamma_{\T(\Box, H)}(K_n)$ is empty,
so $G \Box H \rightarrow K_n$ only if $G$ is empty.
It is easy to show that when $\chi(H) \leq n$, 
$\Gamma_{\T(\Box, H)}(K_n)$ is homomorphically equivalent
to~$K_n$, so $G \Box H \rightarrow K_n$ only if $\chi(G) \leq n$.
Therefore, we have 
$$\chi(G\Box H) = \max\{\chi(G), \chi(H)\}.$$
This is the \emph{Sabidussi identity}, which is sometimes viewed
as a companion formula to Hedetniemi's conjecture.

There is no corresponding formula for the lexicographic product.
In general, we have $\chi(G\circ H) = \chi(G \circ K_m)$, 
where $m = \chi(H)$ (see \cite{GellerStahl}). Thus $\chi(G\circ H)$
does not depend on the structure of $H$, just on its chromatic number.
However, the structure of $G$ is relevant. We have  
$G \circ K_m = \Lambda_{\T(\circ, K_m)}(G) \rightarrow K_n$
if and only if $G \rightarrow \Gamma_{\T(\circ, K_m)}(K_n)$,
and the latter is homomorphically equivalent to the
{\em Kneser graph} $K(n,m)$ defined by
\begin{eqnarray*}
V(K(n,m)) & = & \mathcal{P}_m(V(K_n)), \\
A(K(n,m)) & = & \{ (A, B) : A \cap B = \emptyset \}.
\end{eqnarray*}
The name of these graphs is derived from Kneser's conjecture
which states that $\chi(K(n,m)) = n - 2m + 2$.
Kneser's conjecture was proved by Lov\'{a}sz~\cite{Lov:Knesers}
in a famous paper which introduced the field of
``topological bounds'' on the chromatic number.

\subsection{Products, functors and Hedetniemi's conjecture}

Our examples so far have situated Hedetniemi's conjecture within
the more general problem of determining the chromatic number
of graphs of the type $R(K_n)$, where $R$ is some right adjoint
in the thin category of graphs. These questions can be easy or hard,
but there seems to be nothing special about the fact that $\times$ 
is the ``correct'' product in the category of graphs. The special
property of $\times$ that will be relevant to our discussion is 
the fact that thin right adjoints commute with it, 
up to homomorphic equivalence:
$$
R(G \times H) \leftrightarrow R(G) \times R(H).
$$
Here, $\times$ cannot be replaced by the various ``products''
of graph theory. It is a property that can be used to
say something meaningful about Hedetniemi's conjecture,
when suitable functors are found, as shown in Section~\ref{ag}.

\subsection{A right adjoint of a central Pultr functor}
\label{ss:racpf}

Our last example of this section introduces a functor
that we will use in Section~\ref{pf}.
It is our first example of a thin right adjoint that is not a central
Pultr functor. It can be motivated by the following question.
\begin{quote}
Does there exist, for every integer $n$, a graph $G_n$
such that $\chi(G_n) = n$ and $G_n$ admits a $n$-colouring
$f: G_n \rightarrow K_n$ where the neighbourhood of each 
colour class is an independent set?
\end{quote}
In \cite{GyaJenSti:On-graphs-with}, Gy\'{a}rf\'{a}s, Jensen and Stiebitz
present the question as a strengthening 
of a question of Harvey and Murty. Note that if two adjacent vertices 
$u_1$ and $u_2$ are respective neighbours of two identically coloured
vertices $u_0$ and $u_3$, then $u_0, u_1, u_2, u_3$ is a walk of length
three between two identically coloured vertices. Therefore the above
question can be reformulated in terms of the functor $\Gamma_{\T(3)}$
introduced in the first example of this section.
\begin{quote}
Does there exist, for every integer $n$, a graph $G_n$
such that $\chi(G_n) = \chi(\Gamma_{\T(3)}(G_n)) = n$?
\end{quote}
It turns out that $\Gamma_{\T(3)}$ has a thin right adjoint $\Omega_{\T(3)}$,
introduced in~\cite{Tar:Mul}.
For a graph $G$,  $\Omega_{\T_3}(G)$ is the graph constructed as follows.
The vertices of $\Omega_{\T_3}(G)$ are the couples $(u,U)$ such that
$u \in V(H)$, $U \subseteq V(G)$, and every vertex in $U$ is adjacent to $u$.
Two couples $(u,U)$, $(v,V)$ are joined by an edge of $\Omega_{\T_3}(H)$ 
if $u \in V$, $v \in U$, and every vertex
in $U$ is adjacent to every vertex in $V$.
It is easy to see that $\chi(\Omega_{\T(3)}(K_n)) \leq n$.
And since the condition $\Gamma_{\T(3)}(G_n) \rightarrow K_n$ is 
equivalent to $G_n \rightarrow \Omega_{\T(3)}(K_n)$, a graph
$G_n$ exists with the required properties if and only if
$G_n = \Omega_{\T(3)}(K_n)$ fits the bill, that is
$\chi(\Omega_{\T(3)}(K_n)) = n$. Once again, the question
reduces to determining the chromatic number of some
thin right adjoint of $K_n$. The question is answered in the
affirmative in \cite{GyaJenSti:On-graphs-with}.

\subsection{Excursion: Topological bounds on chromatic numbers}
As an aside note, we mention that the method for finding a lower bound
on $\chi(\Omega_{\T(3)}(K_n))$ in \cite{GyaJenSti:On-graphs-with}
is the topological method devised by Lov\'{a}sz \cite{Lov:Knesers}
for Kneser graphs. Thus the ``topological bounds'' are tight
for chromatic numbers of $\Omega_{\T(3)}(K_n)$ and 
$\Gamma_{\T(\circ, K_m)}(K_n)$. They are also trivially tight
for $\Gamma_{\T(\Box, H)}(K_n)$, and Hedetniemi's conjecture
would imply that they are also tight for 
$\Gamma_{\T(\times, H)}(K_n)$. Since it is easy to devise graphs
for which such topological bounds are not tight, it is intriguing
to see it work well in the case of many right adjoints.

Is it a coincidence? In modern terms, 
the bounds can be defined via functors which associate to a graph a 
``hom-complex'' and eventually an object in a category of topological 
spaces. This point of view is developed in Section 7 of \cite{DocSch:Topology}.
Perhaps the effectiveness of the topological bounds can be traced functorially
in some way. As far as we know, there is no unifying theory as to 
when topological bounds on the chromatic number are tight.

\section{The arc graph construction} \label{ag}

\subsection{Multiplicative complete graphs in $\mathcal{G}$ and $\mathcal{D}$,
and the Poljak-R\"odl function}

The complete graphs $K_1$ and $K_2$ can be shown to be
multiplicative both in $\mathcal{G}$ and $\mathcal{D}$
with relatively straightforward arguments. The first nontrivial
case of Hedetniemi's conjecture was established by El-Zahar 
and Sauer in the aptly named paper~\cite{EZaSau:Chro}. 

\begin{theorem}[\cite{EZaSau:Chro}] \label{k3mul}
$K_3$ is multiplicative in $\mathcal{G}$.
\end{theorem}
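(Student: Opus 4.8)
The plan is to pass to the exponential graph and prove the statement there. By definition, $K_3$ is multiplicative precisely when $G\times H\to K_3$ implies $G\to K_3$ or $H\to K_3$. If $\chi(H)\le 3$ the second alternative holds trivially, so I would fix a graph $H$ with $\chi(H)>3$ and use the adjunction $G\times H\to K_3\Leftrightarrow G\to K_3^H$, where $K_3^H=\Gamma_{\T(\times,H)}(K_3)$. For this fixed $H$, the implication $G\to K_3^H\Rightarrow G\to K_3$ holds for every $G$ if and only if $K_3^H\to K_3$; thus, as already recorded in the excerpt, the whole theorem reduces to showing $\chi(K_3^H)\le 3$ for every $H$ with $\chi(H)>3$.

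First I would unpack $K_3^H$. Its vertices are all maps $f\colon V(H)\to\mathbb{Z}_3$, and two maps $f,g$ are adjacent exactly when no edge of $H$ joins $f^{-1}(i)$ to $g^{-1}(i)$ for any colour~$i$; in particular $f$ carries a loop iff $f$ is a proper $3$-colouring of~$H$, so the hypothesis $\chi(H)>3$ says precisely that $K_3^H$ is loopless. Two structural observations drive the argument. The three constant maps form a triangle in $K_3^H$, so any $3$-colouring of $K_3^H$ must assign them the three distinct colours; I would normalise so that the constant map~$i$ receives colour~$i$. Moreover, if $f\sim g$ then on their agreement set $D=\{v:f(v)=g(v)\}$ the common restriction is a proper $3$-colouring of $H[D]$, whence $D\ne V(H)$ since $H$ is not $3$-colourable.

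The core step is to build a homomorphism $\gamma\colon K_3^H\to K_3$ extending the identity on the constant maps. The idea is to read off from each map~$f$ a single \emph{dominant} label in $\mathbb{Z}_3$, determined by the combinatorial type of its colour classes relative to the $4$-chromatic obstruction in~$H$, and to arrange that passing to an adjacent map forces this label to change. The levers are the two observations above: the absence of same-colour cross edges constrains how $f^{-1}(i)$ and $g^{-1}(i)$ can overlap, and the agreement-set fact shows that adjacent maps cannot coincide on all of $V(H)$.

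The main obstacle is exactly this construction and its verification: producing a single $\mathbb{Z}_3$-valued invariant that separates \emph{every} adjacent pair. The easy maps are those close to a constant, where a majority-type rule visibly works; the difficulty lies with maps that are far from constant, in which all three colour classes carry edges, so that no naive dominant colour is available. Resolving these cases is where the hypothesis $\chi(H)\ge 4$ must be used in an essential, non-local way---through an odd structure of~$H$ that obstructs every $3$-colouring---and is precisely the point at which the detailed case analysis of El-Zahar and Sauer becomes unavoidable. I therefore expect the bulk of the work, and the genuine content of the theorem, to be concentrated in this verification rather than in the reduction to $K_3^H$.
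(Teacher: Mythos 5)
Your setup is sound: the reduction of the theorem to the claim that $\chi(K_3^H)\le 3$ whenever $\chi(H)>3$ is correct (and is indeed recorded in the paper, for general $n$, via the adjunction $G\times H\to K_n\Leftrightarrow G\to K_n^H$), and your preliminary observations about $K_3^H$ --- looplessness, the triangle of constant maps, the fact that adjacent maps cannot agree everywhere --- are all accurate. But the proposal stops exactly where the theorem begins. The entire content of the result is the construction of a homomorphism $K_3^H\to K_3$, and you do not supply one: the ``single dominant label determined by the combinatorial type of the colour classes'' is never defined, you acknowledge that no majority-type rule works for maps far from constant, and you conclude by deferring to ``the detailed case analysis of El-Zahar and Sauer.'' An appeal to the existence of an argument in \cite{EZaSau:Chro} is a citation, not a proof; as a self-contained argument this has a genuine gap at its core step. (For what it is worth, the actual El-Zahar--Sauer argument does not proceed by exhibiting a global $3$-colouring invariant on all of $K_3^H$ in the way you sketch; it analyses images of odd cycles of $G$ and $H$ under a putative $3$-colouring of $G\times H$, so even the shape of the missing construction is not obviously the right one.)

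You should also be aware that the paper itself offers no proof of this theorem: it is imported from \cite{EZaSau:Chro} and used as the seed from which the multiplicativity of odd cycles and of the circular complete graphs $K_{s/r}$ with $2<\frac{s}{r}<4$ is derived via adjoint functors. So there is no internal argument to compare against; judged on its own terms, your attempt is an honest and correctly framed reduction, but not a proof.
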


However, a few years before, 
Poljak and R\"odl~\cite{PolRod:Arc} had proved that for every
$n \geq 3$, there are digraphs $G_n$ and $H_n$ both with 
$n+1$ vertices and chromatic number $n+1$, such that
$\chi(G_n\times H_n) = n$. Thus $K_n$ fails to be multiplicative
in $\mathcal{D}$ for any $n \geq 3$. This justifies some
skepticism towards Hedetniemi's conjecture.  

Let $\phi: \mathbb{N} \rightarrow \mathbb{N}$ be defined by
$$
\phi(n) = \min \{ \chi(G\times H) : 
G, H \in \mathcal{G} \mbox{ and } \chi(G), \chi(H) = n\}.
$$
Hedetniemi's Conjecture~\ref{hc} is that $\phi(n) = n$ for all $n \in \mathbb{N}$,
but so far this has been verified only for $n \leq 4$. The weak
Hedetniemi Conjecture~\ref{whc} is that $\phi$~is unbounded.
Poljak and R\"odl proved the following result.
\begin{theorem}[\cite{PolRod:Arc}] \label{bfupr}
If $\phi$ is bounded, then its least upper bound is at most $16$.
\end{theorem}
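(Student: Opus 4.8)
The plan is to exploit the \emph{arc graph} functor $\delta$, which sends a digraph $G$ to the digraph on vertex set $A(G)$ with an arc from $a$ to $b$ whenever $h_G(a)=t_G(b)$. This is the central Pultr functor $\Gamma_{\T}$ associated with the template whose $P$ is a single arc and whose $Q$ is a directed path of length two; it is therefore a thin right adjoint and commutes with the product up to homomorphic equivalence, $\delta(G\times H)\leftrightarrow\delta G\times\delta H$. The second ingredient I would record is a Sperner-type estimate for its chromatic number: reading a proper colouring of $\delta G$ as an assignment to each vertex of disjoint sets of ``in-colours'' and ``out-colours'' and invoking Sperner's theorem gives, in the digraph case,
\[
  \chi(\delta G)=\min\Bigl\{\,m:\binom{m}{\lfloor m/2\rfloor}\ge\chi(G)\,\Bigr\}.
\]
For the statement about $\mathcal{G}$ I would first run the whole argument for digraphs and then transfer it to graphs, which requires a symmetric variant of $\delta$; checking the exact chromatic estimate for that variant is where the numerical constant is ultimately pinned down.

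Writing $g(m)=\binom{m}{\lfloor m/2\rfloor}$ and $g^{-1}(x)=\min\{m:g(m)\ge x\}$, the next step is to turn these two facts into a self-referential inequality for $\phi$. Fix $n$ and choose extremal graphs $G,H$ with $\chi(G)=\chi(H)=n$ and $\chi(G\times H)=\phi(n)$. Applying $\delta$, both $\delta G$ and $\delta H$ have chromatic number exactly $g^{-1}(n)$, while $\chi(\delta G\times\delta H)=\chi(\delta(G\times H))=g^{-1}(\phi(n))$. Since $\phi$ is a minimum over \emph{all} pairs with the prescribed chromatic numbers, this yields $\phi\bigl(g^{-1}(n)\bigr)\le g^{-1}(\phi(n))$. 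Substituting $n=g(m)$ and unfolding the definition of $g^{-1}$ turns this into an amplification inequality,
\[
  \phi\bigl(g(m)\bigr)\ \ge\ g\bigl(\phi(m)-1\bigr)+1 .
\]

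With the amplification inequality in hand the endgame is short. The map $x\mapsto g(x-1)+1$ is strictly expanding once $x$ passes a fixed threshold $t$, i.e.\ $g(x-1)+1>x$ for $x>t$; so if $\phi(m)>t$ for even a single $m$, then $\phi(g(m))>\phi(m)$, and iterating $m\mapsto g(m)$ produces a strictly increasing sequence of values of $\phi$, contradicting boundedness. Hence a bounded $\phi$ never exceeds $t$, and its least upper bound is at most $t$. The raw digraph estimate above makes $x\mapsto g(x-1)+1$ expand already for small $x$; the symmetrisation needed to work in $\mathcal{G}$ weakens the estimate (morally replacing $\chi(G)$ by $\sqrt{\chi(G)}$ through an orientation/pairing argument), which squares the critical value and gives $t=16$.

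The main obstacle I anticipate is not the bootstrapping, which is routine once the inequality is available, but the two quantitative inputs that fix the constant: establishing the precise Sperner identity for the symmetric arc-graph functor appropriate to $\mathcal{G}$ rather than the cleaner digraph version, and verifying that the extremal factors $G,H$ push through $\delta$ with \emph{exactly} the chromatic numbers required to invoke $\phi$ as a genuine minimum (so that the equality $\chi(\delta G)=\chi(\delta H)=g^{-1}(n)$ is not merely an inequality). These are precisely the points at which the value $16$, as opposed to some other fixed point of the amplification map, is forced.
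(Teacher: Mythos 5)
Your overall strategy is the paper's: use the arc graph functor $\delta$, which as a thin right adjoint satisfies $\delta(G\times H)\leftrightarrow\delta(G)\times\delta(H)$, combine this with two-sided estimates relating $\chi(\delta G)$ to $\chi(G)$ so as to obtain a self-improving inequality that forces a bounded Poljak--R\"odl function below a small fixed point, and pass from digraphs to graphs by an orientation argument that squares the constant. The fixed-point endgame is fine.

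However, one of your two quantitative inputs is false as stated. Writing $g(m)=\binom{m}{\lfloor m/2\rfloor}$ as you do, the exact Sperner identity $\chi(\delta G)=\min\{m: g(m)\ge\chi(G)\}$ holds for \emph{undirected} graphs, not for digraphs: for a digraph only the implication $\chi(G)\le g(m)\Rightarrow\chi(\delta G)\le m$ survives, and the converse fails badly. The transitive tournament is the standard counterexample: $\delta(\vec{K}_N)$ is a shift graph with $\chi(\delta(\vec{K}_N))=\lceil\log_2 N\rceil$, far below $\min\{m:g(m)\ge N\}$ since $g(m)<2^m$. So your assertion that the extremal factors satisfy $\chi(\delta G)=\chi(\delta H)=g^{-1}(n)$ \emph{exactly} is unavailable, and your closing worry --- that the symmetric case is delicate while the digraph case is ``cleaner'' --- is exactly backwards. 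The repair is the weaker but correct lower bound $\chi(G)\le 2^{\chi(\delta G)}$, obtained from the right adjoint $\delta_R$ of $\delta$: choosing $G,H$ with $\chi(G)=\chi(H)=2^m$ guarantees $\chi(\delta G),\chi(\delta H)\ge m$, and monotonicity of the Poljak--R\"odl function (rather than exactness) gives $\chi(\delta G\times\delta H)\ge\psi(m)$; combined with $\chi(\delta(G\times H))\le g^{-1}(\chi(G\times H))$ this yields $\sup\psi\le g^{-1}(\sup\psi)$, hence $\sup\psi\le 4$, the same fixed point you computed. Finally, the passage to $\mathcal{G}$ should be made concrete: no symmetrised $\delta$ is used. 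One orients $G$ and $H$, notes that $\chi(G\times H)=\chi(\vec{G}\times H)$ and that the arcs of $\vec{G}\times H$ partition into those of $\vec{G}\times\vec{H}$ and those of $\vec{G}\times\vec{H}'$ (with $\vec{H}'$ the reversal), so that $\chi(G\times H)\le\chi(\vec{G}\times\vec{H})\cdot\chi(\vec{G}\times\vec{H}')$; running the directed analysis on $\max\{\chi(\vec{G}\times\vec{H}),\chi(\vec{G}\times\vec{H}')\}$ gives the bound $4$ and hence $4^2=16$.
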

This result and its developments, along with their relationship
with adjoint functors, are the topic of this section.

\subsection{The arc graph construction and its chromatic properties}

Let $\vec{P}_i$ be the directed path with $i$ arcs, that is,
the digraph defined by 
$V(\vec{P}_i) = \{ 0, \ldots, i\}$ and
$A(\vec{P}_i) = \{ (0,1), \ldots, (i-1,i) \}$.
Consider the Pultr template $\T = (\vec{P}_1,\vec{P}_2,\epsilon_1,\epsilon_2)$,
where $\epsilon_1$ and $\epsilon_2$ map the arc of $\vec{P}_1$ respectively
to the first and the second arc of $\vec{P}_2$. Then 
$\Gamma_{\T}: \mathcal{D} \rightarrow \mathcal{D}$ is the
{\em arc graph construction} usually denoted $\delta$:
The vertices of $\delta(G) = \Gamma_{\T}(G)$ are the arcs of
$G$, and its arcs correspond to pairs of consecutive arcs of $G$.
We will use the following property of the arc graph construction.

\begin{prop}[
  \cite{HarEnt:Arc-colorings}] \label{bocd}
For any digraph $G$, the following holds.
\begin{itemize}
\item[(i)] If $\chi(\delta(G)) \leq n$, then $\chi(G) \leq 2^{n}$.
\item[(ii)] If $\displaystyle \chi(G) \leq {n \choose {\lfloor n/2 \rfloor}}$,
then $\chi(\delta(G)) \leq n$.
\end{itemize}
\end{prop}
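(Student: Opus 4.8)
The plan is to prove the two implications separately, after reformulating the data. Since the arcs of $\delta(G)=\Gamma_{\T}(G)$ are exactly the consecutive pairs of arcs of $G$, an $n$-colouring of $\delta(G)$ is the same thing as a map $c\colon A(G)\to\{0,\dots,n-1\}$ assigning colours to the arcs of $G$ so that any two consecutive arcs $a\colon u\to v$ and $b\colon v\to w$ satisfy $c(a)\neq c(b)$. I would use this dictionary throughout, and present~(i) first because its set-valued bookkeeping motivates~(ii).

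For part~(i), suppose such a $c$ is given. The idea is to encode each vertex by the set of colours on its incoming arcs: put $S(v)=\{c(a): h_G(a)=v\}\subseteq\{0,\dots,n-1\}$. There are at most $2^n$ such subsets, so $v\mapsto S(v)$ uses at most $2^n$ colours, and it remains to check properness. Given an arc $a\colon u\to v$, its own colour $c(a)$ lies in $S(v)$; on the other hand $c(a)\notin S(u)$, because any arc $b$ with $h_G(b)=u$ is consecutive with $a$ and hence has $c(b)\neq c(a)$. Thus $S(u)\neq S(v)$, the encoding is proper, and $\chi(G)\le 2^n$. The only point to fix carefully is the direction of the convention (incoming arcs, the head map), after which the arc's own colour is the witness separating its two endpoints.

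For part~(ii), suppose $\chi(G)\le\binom{n}{\lfloor n/2\rfloor}$. The key input is Sperner's theorem: the $\lfloor n/2\rfloor$-element subsets of $\{0,\dots,n-1\}$ number exactly $\binom{n}{\lfloor n/2\rfloor}$ and form an antichain. Fix a proper colouring of $G$ and compose it with an injection of its colour classes into this family, obtaining $v\mapsto T(v)$ with $|T(v)|=\lfloor n/2\rfloor$ and $T(u)\neq T(v)$ whenever $u,v$ are adjacent. Since distinct sets of equal size are incomparable, $T(v)\setminus T(u)\neq\emptyset$ for every arc $a\colon u\to v$; I would then colour each arc by an element of the head set minus the tail set, i.e.\ $c(a)\in T(v)\setminus T(u)$. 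For consecutive arcs $a\colon u\to v$ and $b\colon v\to w$ this gives $c(a)\in T(v)$ while $c(b)\in T(w)\setminus T(v)$, so $c(a)\neq c(b)$, and $c$ is an $n$-colouring of $\delta(G)$.

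The delicate part is not the routine verifications but pinning down the exact constants, and this lives in~(ii). The bound $\binom{n}{\lfloor n/2\rfloor}$ is forced by Sperner's theorem: one wants an antichain of equal-sized sets (so that head-minus-tail is never empty) that is as large as possible, and the largest such antichain has precisely this size. The second subtlety is making the arc-colouring rule \emph{local} yet compatible across every consecutive pair; the ``head set minus tail set'' recipe achieves exactly this, because membership in $T(v)$ cleanly distinguishes an arc entering $v$ from one leaving $v$.
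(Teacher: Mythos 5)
Your proof is correct, but it is not the proof the paper gives: the paper deliberately proves this proposition ``using adjoint functors''. It introduces the thin right adjoint $\delta_R$ of $\delta$, whose vertices on input $H$ are pairs $(U,V)$ of vertex sets with all arcs from $U$ to $V$ present, and with $((U,V),(W,X))$ an arc when $V\cap W\neq\emptyset$; both bounds are then read off from structural features of the single graph $\delta_R(K_n)$, namely the retraction $(U,V)\mapsto(U,\overline{U})$ onto $2^n$ vertices for~(i) and the complete subgraph on the pairs $(U,\overline{U})$ with $|U|=\lfloor n/2\rfloor$ for~(ii), combined with the adjunction $\delta(G)\to K_n \Leftrightarrow G\to\delta_R(K_n)$. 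Your argument is the direct combinatorial one (essentially Harner--Entringer/Poljak--R\"odl): encode a vertex by the set of colours on its incoming arcs for~(i), and colour an arc $u\to v$ by an element of $T(v)\setminus T(u)$ for~(ii). The two are really the same computation in different clothing --- your $S(v)$ is the first coordinate of the canonical homomorphism $G\to\delta_R(K_n)$ --- but your version is more elementary and self-contained, while the paper's version is chosen to showcase the method the whole survey is about (and the object $\delta_R$ is reused in the surrounding discussion). One small quibble: what you invoke in~(ii) is not Sperner's theorem but only the trivial facts that equal-sized distinct sets are incomparable and that there are $\binom{n}{\lfloor n/2\rfloor}$ of them; Sperner's theorem is what shows this choice is \emph{optimal}, i.e.\ that the bound in~(ii) cannot be improved, which is how the paper uses it later in the proof of Proposition~\ref{mcg}. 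This does not affect the validity of your proof.
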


\begin{proof} We present a proof using adjoint functors.
It turns out that $\delta$ has a thin right adjoint, which we will call~$\delta_R$.
The vertices of $\delta_R(G)$ are the ordered pairs
$(U,V)$ of sets of vertices of $G$ such that $(u,v) \in A(G)$
for all $u \in U$ and $v \in V$. The arcs of $\delta_R(G)$
are the ordered pairs $((U,V),(W,X))$ such that $V \cap W \neq \emptyset$.
Note that the vertices of $\delta_R(K_n)$ are the ordered pairs
$(U,V)$ of disjoint sets of vertices of $K_n$. 
The map $f: \delta_R(K_n) \rightarrow \delta_R(K_n)$ defined by
$f(U,V) = (U, \overline{U})$ (where $\overline{U}$ is the complement of $U$)
is an endomorphism, with image of size $2^n$. Therefore
$\chi(\delta_R(K_n)) \leq 2^n$. Using adjunction we then get
$$
\chi(\delta(G)) \leq n
\Rightarrow  \delta(G) \rightarrow K_n
\Rightarrow G \rightarrow \delta_R(K_n)
\Rightarrow \chi(G) \leq 2^n.
$$
Also, $\delta_R(K_n)$ contains a copy of $\displaystyle K_{{n \choose \lfloor n/2 \rfloor}}$
induced by the sets $(U, \overline{U})$ such that $|U| = \lfloor n/2 \rfloor$.
Hence
$$
\chi(G) \leq {n \choose \lfloor n/2 \rfloor}
\Rightarrow G \rightarrow K_{{n \choose \lfloor n/2 \rfloor}} \rightarrow \delta_R(K_n)
\Rightarrow \delta(G) \rightarrow K_n
\Rightarrow \chi(\delta(G)) \leq n.
$$
\end{proof}

Therefore we have $\chi(\delta(G)) \simeq \log(\chi(G))$ and similarly
$\chi(\delta^n(G)) \simeq \log^n(\chi(G))$ (where the exponents represent composition). 
These are essentially the
only known examples of right adjoints with non-trivial predictable
effects on the chromatic number of general graphs. 
Chromatic numbers of graphs obtained by applying arc-graph-like functors
to transitive tournaments have recently been studied in~\cite{AvaLuczRod:On-generalized,AvaKayRei:The-chromatic}.
However, the following is not known.

\begin{problem} Suppose that $R: \mathcal{D} \rightarrow \mathcal{D}$ is 
a right adjoint such that for some unbounded functions 
$a, b: \mathbb{N} \rightarrow \mathbb{N}$ we have 
$a(\chi(G)) \leq \chi(R(G)) \leq b(\chi(G))$ for all $G \in \mathcal{D}$. 
Does it follow that for some $n$ we then have $\delta^n(G) \rightarrow R(G)$
for all $G \in \mathcal{D}$?
\end{problem} 

\subsection{The possible bounds on the Poljak-R\"odl function}

The directed version of Theorem~\ref{bfupr} is the following.
\begin{prop}\label{bfdpr}
Let $\psi: \mathbb{N} \rightarrow \mathbb{N}$ be defined by
$$
\psi(n) = \min \{ \chi(G\times H) : 
G, H \in \mathcal{D} \mbox{ and } \chi(G), \chi(H) = n\}.
$$
If $\psi$ is bounded, then its least upper bound is at most $4$.
\end{prop}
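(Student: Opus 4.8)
The plan is to combine the two properties of the arc graph construction $\delta=\Gamma_{\T}$ emphasised above: as a thin right adjoint it commutes with the product, $\delta(G\times H)\leftrightarrow\delta(G)\times\delta(H)$, and by Proposition~\ref{bocd} it compresses the chromatic number to roughly its binary logarithm. To make the bookkeeping clean I would first pass to the monotone variant
\[
\widetilde\psi(n)=\min\{\chi(G\times H):G,H\in\mathcal D,\ \chi(G)\ge n,\ \chi(H)\ge n\}.
\]
Here $\widetilde\psi$ is non-decreasing by construction, and I claim $\widetilde\psi=\psi$. Indeed $\widetilde\psi\le\psi$ is immediate, and conversely, given a pair $G_0,H_0$ with $\chi\ge n$ realising $\widetilde\psi(n)$, one removes vertices one at a time — each removal lowers $\chi$ by at most $1$ — to obtain induced sub-digraphs $G_0',H_0'$ of chromatic number exactly $n$; then $G_0'\times H_0'\subseteq G_0\times H_0$ gives $\psi(n)\le\chi(G_0'\times H_0')\le\widetilde\psi(n)$. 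In particular $\psi$ is non-decreasing, so if it is bounded with least upper bound $M$ it is eventually equal to $M$, say $\psi(n)=\widetilde\psi(n)=M$ for all $n\ge n^{\ast}$.

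For the core step I would fix $n$ so large that $n':=\lceil\log_2 n\rceil\ge n^{\ast}$, and take digraphs $G,H$ with $\chi(G),\chi(H)\ge n$ and $\chi(G\times H)=\widetilde\psi(n)=M$. By the contrapositive of Proposition~\ref{bocd}(i) (if $\chi(G)>2^{m}$ then $\chi(\delta(G))\ge m+1$) we get $\chi(\delta(G)),\chi(\delta(H))\ge n'$, so $(\delta(G),\delta(H))$ is an admissible pair for $\widetilde\psi(n')$. Writing $m_0=\min\{m:\binom{m}{\lfloor m/2\rfloor}\ge M\}$, Proposition~\ref{bocd}(ii) applied to $G\times H$ gives $\chi(\delta(G\times H))\le m_0$. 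Using the commutation of the right adjoint $\delta$ with $\times$, this yields
\[
\widetilde\psi(n')\le\chi\bigl(\delta(G)\times\delta(H)\bigr)=\chi\bigl(\delta(G\times H)\bigr)\le m_0 .
\]
Since $n'\ge n^{\ast}$, the left-hand side equals $M$, and we have derived the inequality $M\le m_0$.

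Finally I would observe that $m_0<M$ as soon as $M\ge 5$: for $M-1\ge 4$ one has $\binom{M-1}{\lfloor(M-1)/2\rfloor}\ge M$, so the defining minimum is attained at some $m\le M-1$. Hence $M\le m_0$ is impossible unless $M\le 4$, which is exactly the range of fixed points of $m\mapsto\min\{m':\binom{m'}{\lfloor m'/2\rfloor}\ge m\}$, and this is where the constant $4$ originates. I expect the only delicate point to be the reduction to $\widetilde\psi$ — that is, arguing that one may require the two factors to carry matching, arbitrarily large chromatic number without changing the supremum, which is precisely what lets the recursion produce an admissible pair $(\delta(G),\delta(H))$ despite the factors having possibly unequal chromatic numbers. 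Once this is in place, the step $M\le m_0<M$ is a one-line computation. The graph version, Theorem~\ref{bfupr}, presumably follows the same scheme but replaces $\delta$ by an undirected analogue whose compression constant is weaker, which would account for the larger bound $16$ there versus $4$ here.
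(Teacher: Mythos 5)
Your argument is correct and is essentially the paper's own proof: both apply the adjoint identity $\delta(G\times H)\leftrightarrow\delta(G)\times\delta(H)$ together with the chromatic bounds of Proposition~\ref{bocd} to a pair realising the bound at a sufficiently large level, and then conclude numerically from the growth of $\binom{m}{\lfloor m/2\rfloor}$ that the bound is at most $4$. The only difference is that you spell out, via the vertex-deletion argument and the auxiliary function $\widetilde\psi$, the monotonicity of $\psi$ that the paper merely asserts.
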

\begin{proof}
The proof uses the fact that $\delta$ is a right adjoint,
hence it commutes with the product. Let $b$ be the least upper bound
on~$\psi$. Let $n$ be an integer such that $\psi(n) = b$. Note that
$\psi$ is non-decreasing; thus there exist digraphs $G, H$ such that 
$\chi(G) = \chi(H) = 2^n$ and $\chi(G\times H) = b$. By Lemma~\ref{bocd},
we have $\chi(\delta(G)) \geq n$ and $\chi(\delta(H)) \geq n$,
whence $\chi(\delta(G) \times \delta(H)) \geq b$. However
$\delta(G) \times \delta(H) \leftrightarrow \delta(G\times H)$,
whence $\chi(\delta(G\times H)) \geq b$. By Lemma~\ref{bocd},
for any integer $m$ such that $b = \chi(G\times H) \leq {m \choose \lfloor m/2 \rfloor}$,
we have $b \leq \chi(\delta(G\times H)) \leq m$. The only integers $b\geq 3$
which satisfy this property are $3$ and $4$.
\end{proof}
The proof of Theorem~\ref{bfupr} is essentially an adaptation of the
proof of Corollary~\ref{bfdpr}. Indeed for $G, H \in \mathcal{G}$,
let $\vec{G}, \vec{H}$ be orientations of $G$ and $H$ respectively,
constructed by selecting one of the arcs $(u,v), (v,u)$ for each
edge $\{u,v\}$ of $G$ and $H$. Then $\chi(G\times H) = \chi(\vec{G} \times H)$,
since $\vec{G} \times H$ is an orientation of $G\times H$.
Furthermore the arcs of $\vec{G} \times H$ can be partitioned 
into arcs of $\vec{G} \times \vec{H}$ and arcs of $\vec{G} \times \vec{H}'$,
where $\vec{H}'$ is obtained by reversing the arcs of $\vec{H}$.
Theorem~\ref{bfupr} is then proved by adapting the argument of the 
proof of Corollary~\ref{bfdpr} to the function $\psi'$ defined by
$$
\psi'(n) = \min \{ \max\{ \chi(\vec{G}\times \vec{H}), \chi(\vec{G}\times \vec{H}') \}  : 
\vec{G}, \vec{H} \in \mathcal{D} \mbox{ and } \chi(\vec{G}), \chi(\vec{H}) = n\}.
$$
This accounts for the value $16 = 4^2$ in the statement of Theorem~\ref{bfupr}.
These results were improved shortly afterwards, independently by
Poljak, Schmerl and Zhu (the latter two unpublished):
\begin{theorem}[\cite{Pol:cdia}]
\begin{itemize}
\item[]
\item[(i)] If $\psi$ is bounded, then its least upper bound is 3.
\item[(ii)] If $\phi$ is bounded, then its least upper bound is at most 9.
\end{itemize}
\end{theorem}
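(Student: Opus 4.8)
The plan is to sharpen Proposition~\ref{bfdpr}, whose proof---using only the two-sided estimate of Proposition~\ref{bocd} together with the fact that $\delta$ commutes with products---already confines a finite least upper bound of $\psi$ to the set $\{3,4\}$. Part~(ii) then follows from part~(i) with no extra work, by rerunning the orientation reduction described after Proposition~\ref{bfdpr}: passing from $G\times H$ to the two orientations $\vec{G}\times\vec{H}$ and $\vec{G}\times\vec{H}'$ costs a square, so the directed value~$3$ propagates to $3^2 = 9$, exactly as the directed value~$4$ gave $16$ in Theorem~\ref{bfupr}. For part~(i) the lower bound is immediate, since $K_2$ is multiplicative in~$\mathcal{D}$: the product of two digraphs of chromatic number at least~$3$ is again non-bipartite, so $\psi(n)\geq 3$ for $n\geq 3$. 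Hence the whole content of part~(i) is to exclude the value~$4$.

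To do this I would argue by contradiction, supposing the least upper bound equals~$4$. As in Proposition~\ref{bfdpr}, since $\psi$ is non-decreasing there are digraphs $G,H$ with $\chi(G)=\chi(H)=2^n$ for $n$ arbitrarily large and $\chi(G\times H)=4$. Proposition~\ref{bocd}(i) gives $\chi(\delta(G)),\chi(\delta(H))\geq n$, and because $\delta$ is a right adjoint it commutes with~$\times$, so $\chi(\delta(G\times H))=\chi(\delta(G)\times\delta(H))\geq\psi(n)=4$; Proposition~\ref{bocd}(ii) supplies the reverse bound $\chi(\delta(G\times H))\leq m_b$, where $m_b=\min\{\,m : b\leq{m \choose {\lfloor m/2 \rfloor}}\,\}$; for $b=4$ we have $m_4=4$. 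The natural next move is to iterate~$\delta$, tracking how the chromatic numbers of the two factors descend (roughly like an iterated logarithm) while the product is pinned to a constant value, in the hope of driving that value into the range $b>m_b$ that was shown impossible in Proposition~\ref{bfdpr}.

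The hard part---and I expect it to be the crux---is that the binomial sandwich of Proposition~\ref{bocd} is too lossy to separate $3$ from~$4$. At $b=4$ it yields only $4\leq\chi(\delta(G\times H))\leq m_4=4$, that is, the exact value~$4$, and this is perfectly self-consistent, as witnessed by $\chi(\delta(K_4))=4$; so each further application of~$\delta$ merely reproduces~$4$ and the iteration closes up without contradiction. Excluding~$4$ therefore cannot be achieved by this adjoint bookkeeping alone: one must replace Proposition~\ref{bocd} by the \emph{exact} chromatic numbers of the iterated arc graphs and exploit the detailed structure of the extremal products, rather than their chromatic number alone. This is the additional combinatorial ingredient supplied by~\cite{Pol:cdia} through its analysis of iterated antichains, and it is a clean illustration of the paper's broader theme: the adjoint-functor calculus organises these reductions elegantly, but does not by itself close the last numerical gap.
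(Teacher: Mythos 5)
Your proposal is an accurate reconstruction of the \emph{outline} of the argument, and it matches what the paper itself says about this theorem --- namely that the paper does not prove it either, but cites \cite{Pol:cdia} with the remark that the method is ``a finer analysis of the chromatic properties of $\delta$'' and that the value $9=3^2$ in part~(ii) comes from the same orientation reduction that produced $16=4^2$ in Theorem~\ref{bfupr}. Your reduction of (ii) to (i), your observation that $\psi(n)\geq 3$ for $n\geq 3$ because $K_2$ is multiplicative in $\mathcal{D}$, and your identification of the sole remaining task as excluding the value $4$ are all correct and consistent with Proposition~\ref{bfdpr}. Your diagnosis of why the tools available in the paper cannot finish the job is also correct: at $b=4$ the sandwich of Proposition~\ref{bocd} closes to the self-consistent fixed point $4\leq\chi(\delta(G\times H))\leq 4$ (indeed $\chi(\delta(K_4))=4$), so iterating $\delta$ and tracking chromatic numbers alone gets you nowhere.

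The genuine gap is that you stop exactly there: the crux of part~(i), ruling out $b=4$, is named but not carried out. What is missing is the finer combinatorial ingredient of \cite{Pol:cdia} --- an exact characterization of when $\chi(\delta(G))\leq n$ in terms of covers of $V(G)$ by iterated antichains, which is strictly sharper than the binomial bound of Proposition~\ref{bocd} and is what actually breaks the fixed point at $4$. Since you neither state nor prove any such refinement, your text is a correct reduction plus an honest assessment of why it cannot be completed with the stated tools, rather than a proof. To be fair, the paper is in the same position: it presents the theorem as cited, not proved, so there is no in-paper proof of the missing step for you to have matched; but as a standalone proof attempt, part~(i) remains open in your write-up.
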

The proof method is simply a finer analysis of the chromatic properties
of $\delta$; in particular this accounts for the value $9 = 3^2$
in the second statement. Thus the case of undirected graphs is never dealt 
with directly. Proof methods have always used directed graphs.
\begin{problem}
Is there a functor $R: \mathcal{G} \rightarrow \mathcal{G}$
that allows to provide tighter upper bounds on $\phi$,
if the case that it is bounded?
\end{problem}
A later result gives a further link between $\phi$ and $\psi$:
\begin{theorem}[\cite{TarWeh:prf}]
$\phi$ is bounded if and only if $\psi$ is bounded.
\end{theorem}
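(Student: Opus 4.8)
The plan is to compare the two products through orientations and symmetrisations, reducing the equivalence to one pointwise inequality in each direction. For a digraph $D$ write $D'$ for its reverse (all arcs flipped) and let $s(D)$ be the associated undirected graph, i.e.\ the symmetric digraph on $V(D)$ with an edge $\{u,v\}$ whenever $(u,v)$ or $(v,u)$ is an arc of $D$; note that $\chi(D')=\chi(s(D))=\chi(D)$. The implication ``$\phi$ bounded $\Rightarrow$ $\psi$ bounded'' I would obtain for free, with the same bound, by the orientation trick already used for Theorem~\ref{bfupr}: given undirected $A,B$ realising $\phi(n)$, any orientations $\vec A,\vec B$ have $\chi(\vec A)=\chi(\vec B)=n$, and $\vec A\times\vec B$ is a subdigraph of the orientation $\vec A\times B$ of $A\times B$, so $\chi(\vec A\times\vec B)\le\chi(A\times B)=\phi(n)$. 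Hence $\psi\le\phi$ pointwise.

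For the substantive converse, ``$\psi$ bounded $\Rightarrow$ $\phi$ bounded'', I would start from digraphs $C,D$ with $\chi(C)=\chi(D)=n$ and $\chi(C\times D)\le c$, and pass to the undirected graphs $s(C),s(D)$, which again have chromatic number $n$. Splitting $A(s(C))=A(C)\cup A(C)^{-1}$ and likewise for $D$, and expanding the four resulting products, the edge set of $s(C)\times s(D)$ is the union of a symmetrised copy of $C\times D$ and a symmetrised copy of $C\times D'$ — the exact analogue of the decomposition $\vec G\times H=(\vec G\times\vec H)\cup(\vec G\times\vec H')$ from the proof of Theorem~\ref{bfupr}. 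Colouring each part separately and taking the product colouring yields
\[ \chi\bigl(s(C)\times s(D)\bigr)\le \chi(C\times D)\cdot\chi(C\times D'). \]

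The hard part will be controlling the second factor $\chi(C\times D')$: the hypothesis that $\psi$ is bounded only furnishes, for each $n$, a single pair whose \emph{forward} product is small and says nothing about the reversed product, which in general can be large. This is the genuine obstacle, and it is unavoidable in any such argument, since a colouring of $s(C)\times s(D)$ must simultaneously handle the ``mismatched'' edges arising from $C\times D'$. To overcome it I would aim to produce, for each $n$, a good pair in which the factors may be taken \emph{self-reverse}, that is $C\cong C'$ and $D\cong D'$; for such a pair $\chi(C\times D')=\chi(C\times D)\le c$, and the displayed inequality gives $\phi\le\psi^2$, consistent with the known bound $9=3^2$. The remaining crux — promoting a one-sided bound to a reverse-controlled one — I would attack using the arc-graph functor $\delta$, which is a right adjoint (so commutes with $\times$ up to $\leftrightarrow$) and satisfies $\delta(D')\cong\delta(D)'$, together with its logarithmic effect on chromatic numbers from Lemma~\ref{bocd}: the idea is to manufacture self-reverse high-chromatic pairs with bounded products, the standard Poljak--R\"odl examples being self-reverse already. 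Showing that boundedness of $\psi$ entails such reverse-controlled families is where the real work of the theorem lies.
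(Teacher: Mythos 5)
First, note that the survey does not prove this theorem: it is quoted from \cite{TarWeh:prf} with no argument given, so your proposal has to stand entirely on its own. Your easy direction is correct: $\psi\le\phi$ pointwise via orientations, exactly as in the discussion around Theorem~\ref{bfupr}. Your decomposition $\chi\bigl(s(C)\times s(D)\bigr)\le\chi(C\times D)\cdot\chi(C\times D')$ is also correct, and you have correctly located where the difficulty sits.

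But locating the difficulty is all you have done: the converse direction, which is the entire content of the theorem, is not proved. You reduce it to producing, for each $n$, witnesses $C,D$ of $\psi(n)\le c$ with $C\cong C'$ and $D\cong D'$ (or otherwise with $\chi(C\times D')$ controlled), and then explicitly defer this step as ``where the real work lies.'' No argument is offered that boundedness of $\psi$ yields such reverse-controlled witnesses, and neither of your two suggestions closes the gap. The appeal to the Poljak--R\"odl examples is a non sequitur: those digraphs only witness $\psi(n+1)\le n$, i.e.\ that $\psi$ falls below the identity, not that $\psi$ is bounded, so their being self-converse (even if granted) contributes nothing to the implication ``$\psi$ bounded $\Rightarrow\phi$ bounded.'' The appeal to $\delta$ is equally unconvincing: applying $\delta$ to a good pair $(C,D)$ preserves the bound on the product (since $\delta$ commutes with $\times$) and shrinks the chromatic numbers logarithmically by Proposition~\ref{bocd}, but there is no mechanism by which it makes the factors self-converse or controls $\chi(\delta(C)\times\delta(D)')$. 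Natural manoeuvres such as passing to the pair $\bigl(C\times D',\,(C\times D')'\bigr)$ also fail, because for a pair of the form $(M,M')$ the mismatched product $M\times M$ contains a diagonal copy of $M$ and hence has chromatic number $\chi(M)$, so the same obstacle reappears. In short, the proof is missing its main step; the cited reference \cite{TarWeh:prf} resolves it by a genuinely different quantitative argument (with an exponential loss from $\psi$ to $\phi$), not by symmetrising the witnesses.
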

Thus the weak Hedetniemi Conjecture~\ref{whc} is equivalent for
graphs and for digraphs. It would be interesting to connect
the case of digraphs more directly to Hedetniemi's conjecture.
\begin{problem}
Is there a way to prove that if $\psi$ grows too slowly,
then Hedetniemi's conjecture is false?
(Note that if $\psi'$ is sub-quadratic, then Hedetniemi's conjecture
is false.)
\end{problem}

\subsection{Multiplicative complete graphs}

The last tentative application of~$\delta$ to Hedetniemi's conjecture
is an appealing argument that seems to have been first noticed
by Roman Ba\v{s}ic.

\begin{prop}\label{mcg}
If $K_n$ is multiplicative, then 
$\displaystyle K_{{n \choose \lfloor n/2 \rfloor}}$ is multiplicative.
\end{prop}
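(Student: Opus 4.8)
The plan is to exploit the two features of the arc graph functor that already drive the proof of Proposition~\ref{bfdpr}: that $\delta$ commutes with the product (being a right adjoint, so $\delta(G\times H)\leftrightarrow\delta(G)\times\delta(H)$) and that it sits in the adjunction $\delta(G)\rightarrow K\iff G\rightarrow\delta_R(K)$. The first thing I would isolate is a general transfer principle for multiplicativity: if $R$ is a thin right adjoint whose left adjoint $L$ commutes with $\times$ up to homomorphic equivalence, then $R$ sends multiplicative objects to multiplicative objects. This is a short diagram chase. Given $G\times H\rightarrow R(K)$, adjunction yields $L(G\times H)\rightarrow K$; commuting with products gives $L(G)\times L(H)\rightarrow K$; multiplicativity of $K$ gives $L(G)\rightarrow K$ or $L(H)\rightarrow K$; and adjunction again returns $G\rightarrow R(K)$ or $H\rightarrow R(K)$.

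Applying this with $L=\delta$, $R=\delta_R$ and $K=K_n$ shows that the digraph $\delta_R(K_n)$ is multiplicative whenever $K_n$ is. Spelled out for the problem at hand, write $N=\binom{n}{\lfloor n/2\rfloor}$ and suppose $G\times H\rightarrow K_N$. By Proposition~\ref{bocd}(ii) this gives $\chi(\delta(G\times H))\le n$, and since $\delta(G)\times\delta(H)\leftrightarrow\delta(G\times H)$ we obtain $\delta(G)\times\delta(H)\rightarrow K_n$. Multiplicativity of $K_n$ then forces, say, $\delta(G)\rightarrow K_n$, which by adjunction is exactly $G\rightarrow\delta_R(K_n)$. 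So the adjoint machinery delivers, cleanly and with no extra hypotheses, that one of the two factors maps into $\delta_R(K_n)$.

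The difficulty, and the step I expect to be the crux, is the final descent. The transfer argument only yields $G\rightarrow\delta_R(K_n)$, and Proposition~\ref{bocd}(i) turns this into $\chi(G)\le 2^n$, not $\chi(G)\le N$; indeed $\delta_R(K_n)$ has chromatic number $2^n$ (the endomorphism $(U,V)\mapsto(U,\overline U)$ lands in a subdigraph whose underlying graph is complete on $2^n$ vertices) and is in general \emph{not} homomorphically equivalent to a complete graph, so one cannot simply replace it by $K_N$. What should save the sharper bound is that the invariant governing whether a complete graph maps \emph{into} a digraph is not its chromatic number but the size of its largest \emph{symmetric clique} — a set of vertices pairwise joined by arcs in both directions — and a Sperner/LYM-type computation should show that in $\delta_R(K_n)$ this maximum is exactly $\binom{n}{\lfloor n/2\rfloor}$, realised precisely by the balanced pairs $(U,\overline U)$ with $|U|=\lfloor n/2\rfloor$. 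My plan is therefore to rerun the transfer not through chromatic number but through this clique structure, arguing that $G\times H\rightarrow K_N$ together with $\delta(G)\rightarrow K_n$ pins the image of $G$ to the balanced, symmetric part of $\delta_R(K_n)$, hence yields $G\rightarrow K_N$. Making this last implication precise — forcing the image into the balanced clique rather than into the full $2^n$-chromatic adjoint — is where the real work lies, and it is the point at which the appearance of the central binomial coefficient, rather than $2^n$, must be explained.
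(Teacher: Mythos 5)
Your set-up is exactly the paper's: use $\delta(G\times H)\leftrightarrow\delta(G)\times\delta(H)$ and the multiplicativity of $K_n$ to get $\delta(G)\rightarrow K_n$ or $\delta(H)\rightarrow K_n$, and then descend to $G\rightarrow K_N$ where $N={n \choose \lfloor n/2 \rfloor}$. But the descent is the entire content of the proposition, and you leave it as a plan rather than a proof --- and the plan you sketch is aimed at the wrong statement. Knowing that the largest symmetric clique of $\delta_R(K_n)$ has size $N$ tells you which complete graphs map \emph{into} $\delta_R(K_n)$, which is just Lemma~\ref{bocd}(ii) again; what you need is that every \emph{undirected} $G$ with $G\rightarrow\delta_R(K_n)$ satisfies $G\rightarrow K_N$, i.e.\ an upper bound on the chromatic number of the symmetric part of $\delta_R(K_n)$, not a computation of its clique number. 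Moreover the image of $G$ is \emph{not} pinned to the balanced clique $\{(U,\overline{U}):|U|=\lfloor n/2\rfloor\}$: already $(\{i\},\{j\})$ and $(\{j\},\{i\})$ span a digon, so a single edge of $G$ can map to unbalanced pairs, and the extra hypothesis $G\times H\rightarrow K_N$ gives no leverage here.

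The missing ingredient is the classical strengthening of Lemma~\ref{bocd}(i) for undirected graphs, which the paper invokes via Sperner's theorem: for undirected $G$, $\delta(G)\rightarrow K_n$ if and only if $G\rightarrow K_N$. For the nontrivial direction, given an $n$-colouring of $\delta(G)$, assign to each vertex $v$ the set $S(v)\subseteq\{0,\dots,n-1\}$ of colours of arcs with tail~$v$. If $\{u,v\}$ is an edge, the colour of the arc $(u,v)$ lies in $S(u)$ but cannot lie in $S(v)$, since otherwise some arc $(v,w)$ would be a consecutive arc of the same colour; hence $S(u)\not\subseteq S(v)$, and symmetrically $S(v)\not\subseteq S(u)$. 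Adjacent vertices thus receive incomparable sets, and a symmetric chain decomposition of the Boolean lattice into $N$ chains converts $v\mapsto S(v)$ into an $N$-colouring of $G$. With this equivalence in hand the argument closes exactly as you began it. One further caveat, which the paper raises immediately after its proof: since $\delta(G)$ and $\delta(H)$ are digraphs, the multiplicativity of $K_n$ being invoked is multiplicativity in $\mathcal{D}$, not in $\mathcal{G}$, which makes the proposition correct but vacuous for $n\geq 3$.
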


\begin{proof}
The proof uses a strengthening of Lemma~\ref{bocd} (ii) in the case
of undirected graphs: Sperner's theorem implies that if $G$ is an undirected
graph, then $\chi(\delta(G))$ is the largest integer $n$ such that
$\chi(G) \leq {n \choose \lfloor n/2 \rfloor}$. In other words,
for an undirected graph $G$ we have 
$$
\delta(G) \rightarrow K_n \Leftrightarrow G \rightarrow  K_{n \choose \lfloor n/2 \rfloor}.
$$
Now suppose that $G\times H \rightarrow K_{n \choose \lfloor n/2 \rfloor}$.
Then $\delta(G\times H) \rightarrow K_{n}$, that is,
$\delta(G) \times \delta(H) \rightarrow K_{n}$. If $K_n$ is multiplicative,
this implies $\delta(G) \rightarrow K_{n}$ or $\delta(H) \rightarrow K_{n}$,
whence $G\rightarrow K_{n \choose \lfloor n/2 \rfloor}$
or $H\rightarrow K_{n \choose \lfloor n/2 \rfloor}$. Thus if
$K_n$ is multiplicative, then 
$\displaystyle K_{{n \choose \lfloor n/2 \rfloor}}$ is multiplicative.
\end{proof}

The attentive reader may have noticed that the above argument switches between
$\mathcal{G}$ and $\mathcal{D}$. It correctly uses the multiplicativity
of $K_n$ in $\mathcal{D}$ to prove the multiplicativity of
$\displaystyle K_{{n \choose \lfloor n/2 \rfloor}}$ in $\mathcal{G}$.
However, multiplicativity is category sensitive. The precise formulation
of Proposition~\ref{mcg} is the following.

\begin{prop}[The precise formulation of Proposition~\ref{mcg}]\label{mcgprecise}
If $K_n$ is multiplicative in $\mathcal{D}$, then 
$\displaystyle K_{{n \choose \lfloor n/2 \rfloor}}$ is multiplicative
in $\mathcal{G}$.
\end{prop}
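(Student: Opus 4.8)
The plan is to rerun the argument already given for Proposition~\ref{mcg}, but to keep scrupulous track of the category in which each homomorphism statement lives; the entire subtlety of the precise formulation is that the single functor $\delta$ silently carries us out of $\mathcal{G}$ and into $\mathcal{D}$, and the task is to record exactly where undirectedness is used and where only $\mathcal{D}$-level information is available. The guiding observation I would isolate first is that, even when $G$ is a graph (a symmetric digraph), its arc graph $\delta(G)$ is a genuine digraph: a vertex of $\delta(G)$ is an \emph{ordered} arc $(u,v)$, and an arc of $\delta(G)$ records that two arcs are consecutive, which is not a symmetric condition. Consequently any product $\delta(G)\times\delta(H)$ is merely a digraph, and only information about $K_n$ as an object of $\mathcal{D}$ can be applied to it.

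The bridge between the two categories is the Sperner strengthening of Lemma~\ref{bocd}(ii) recorded in the proof of Proposition~\ref{mcg}: for an undirected graph $G$,
\[
\delta(G)\rightarrow K_n \;\Leftrightarrow\; G\rightarrow K_{{n \choose \lfloor n/2 \rfloor}}.
\]
I would first confirm that this equivalence relates a $\mathcal{D}$-homomorphism on the left (from the arc \emph{digraph} $\delta(G)$ to the symmetric digraph $K_n$) to a $\mathcal{G}$-homomorphism on the right, and that its validity genuinely requires $G$ to be symmetric. This is the one and only place where undirectedness is essential, and it is precisely what licenses a legitimate crossing between $\mathcal{G}$ and $\mathcal{D}$.

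With these two points in place, the main chain runs as follows. I would start from the $\mathcal{G}$-hypothesis $G\times H\rightarrow K_{{n \choose \lfloor n/2 \rfloor}}$, noting that $G\times H$ is again a graph. Applying the Sperner equivalence to the graph $G\times H$ yields $\delta(G\times H)\rightarrow K_n$. Since $\delta=\Gamma_{\T}$ is a thin right adjoint, it commutes with products up to homomorphic equivalence, so $\delta(G\times H)\leftrightarrow\delta(G)\times\delta(H)$ and hence $\delta(G)\times\delta(H)\rightarrow K_n$ in $\mathcal{D}$. Now I would invoke the hypothesis that $K_n$ is multiplicative \emph{in $\mathcal{D}$} to obtain $\delta(G)\rightarrow K_n$ or $\delta(H)\rightarrow K_n$, and finally feed each of these back through the Sperner equivalence (applied to the undirected graphs $G$ and $H$) to conclude $G\rightarrow K_{{n \choose \lfloor n/2 \rfloor}}$ or $H\rightarrow K_{{n \choose \lfloor n/2 \rfloor}}$, which is exactly multiplicativity of $K_{{n \choose \lfloor n/2 \rfloor}}$ in $\mathcal{G}$.

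The step I expect to be the real crux is not any computation but the bookkeeping at the middle link: recognising that $\delta(G)\times\delta(H)\rightarrow K_n$ is irreducibly a statement in $\mathcal{D}$, so that $\mathcal{G}$-multiplicativity of $K_n$ would be useless there whereas $\mathcal{D}$-multiplicativity is both available and exactly what is needed. Because multiplicativity is category-sensitive, I would take particular care to verify that the two outer applications of the Sperner equivalence really are to honest undirected graphs, so that the final disjunction is correctly asserted in $\mathcal{G}$ and not inadvertently in $\mathcal{D}$.
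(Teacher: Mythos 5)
Your proof is correct and is essentially the paper's own argument: the Sperner equivalence $\delta(G)\rightarrow K_n \Leftrightarrow G \rightarrow K_{{n \choose \lfloor n/2 \rfloor}}$ for undirected graphs, the commutation of the thin right adjoint $\delta$ with products, and the application of $\mathcal{D}$-multiplicativity of $K_n$ to $\delta(G)\times\delta(H)$. Your category bookkeeping is precisely the point the paper itself makes in the remark between Propositions~\ref{mcg} and~\ref{mcgprecise}, so nothing further is needed.
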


Thus the result turns out to be trivial, since the only complete graphs that are 
multiplicative in $\mathcal{D}$ are $K_0, K_1$ and $K_2$. 
Attempts have been made to fix the argument, or to find similar
arguments, using other functors.
(The symmetrisation of $\delta$ does not seem to be the correct functor, either.)
In Section~\ref{pf}, we examine the results obtained using
the functors related to the Pultr template $\T(3)$ of Section~\ref{af},
and its generalizations.

\subsection{Excursion: odd girth of shift graphs} 
Let $\vec{K}_n$ denote the transitive tournament on $n$ vertices,
that is,
\begin{eqnarray*}
V(\vec K_n) & = & \{0, 1, \ldots, n-1\}, \\
A(\vec K_n) & = & \{ (i,j) : i < j \}.
\end{eqnarray*}
The {\em shift graph} $S(n,k)$ is the graph $\delta^{k-1}(\vec{K}_n)$.
The shift graphs (or their symmetrisations) are folklore ``topology free''
examples of graphs with large chromatic number and no short odd cycles.
We show how these properties connect to adjoint functors.

\begin{prop} $S(n,k)$ has no odd cycle with fewer than $2k+1$ vertices,
and for $\log_2^{k-1}(n) > m$ we have $\chi(S(n,k)) \geq m$ (where the exponent
represents composition).
\end{prop}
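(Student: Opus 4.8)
The plan is to treat the two assertions separately, taking the chromatic bound first. For this I would iterate the adjoint-based inequality of Proposition~\ref{bocd}. Putting $n' = \chi(\delta(G))$ in part~(i) gives $\chi(G) \le 2^{\chi(\delta(G))}$, that is, $\chi(\delta(G)) \ge \log_2 \chi(G)$ for every digraph $G$. Since $\log_2$ is increasing, applying this along the chain $\vec{K}_n,\ \delta(\vec{K}_n),\ \ldots,\ \delta^{k-1}(\vec{K}_n) = S(n,k)$ and using $\chi(\vec{K}_n) = n$ yields $\chi(S(n,k)) \ge \log_2^{k-1}(n)$ by induction on the number of applications of $\delta$. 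In particular, $\log_2^{k-1}(n) > m$ forces $\chi(S(n,k)) > m$, hence $\chi(S(n,k)) \ge m$, as claimed.

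For the odd girth I would argue by induction on $k$, showing that a single application of $\delta$ raises the odd girth by at least $2$ provided the input digraph is acyclic. The base case $S(n,1) = \vec{K}_n$ has underlying graph $K_n$, of odd girth $3 = 2\cdot 1 + 1$ (or $\infty$ if $n \le 2$), and $\vec{K}_n$ is acyclic. Acyclicity is preserved by $\delta$, since a directed cycle in $\delta(G)$ would read off a directed closed walk through the heads of its arcs in $G$; thus every $\delta^{j}(\vec{K}_n)$ is acyclic, and only the inductive step remains.

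For the step I would take any odd closed walk $e_0, e_1, \ldots, e_L = e_0$ in the underlying graph of $\delta(G)$, where each $e_i$ is an arc of $G$, consecutive arcs meet head-to-tail, and each step is labelled forward or backward according to its orientation in $\delta(G)$. Recording the shared vertex $z_i$ of $e_i$ and $e_{i+1}$, I would project to the closed walk $z_0, z_1, \ldots, z_{L-1}, z_0$ in $G$. The decisive bookkeeping is that $z_i = z_{i+1}$ precisely when steps $i$ and $i+1$ have opposite directions, so the number of such ``stalls'' equals the number of direction changes around the cyclic sequence of labels, which is even, say $2s$; deleting the stalls leaves a genuine closed walk in $G$ of length $L - 2s$, which is still odd. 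Moreover $s \ge 1$, for otherwise all steps share a direction and the walk is a directed closed walk in the acyclic digraph $\delta(G)$, a contradiction. Hence $G$ has an odd closed walk of length $L - 2s \le L - 2$, so the odd girth of $G$ is at most that of $\delta(G)$ minus $2$. Iterating from the base value $3$ gives odd girth at least $3 + 2(k-1) = 2k+1$ for $S(n,k)$.

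The main obstacle is exactly the combinatorial bookkeeping in the projection: verifying that consecutive projected vertices coincide at the direction changes, that the resulting stall count is even, and — most importantly — invoking acyclicity to exclude the degenerate case $s = 0$ of an all-forward directed closed walk. This last point is where the transitive-tournament structure of $\vec{K}_n$, propagated through $\delta$, is genuinely used; the remaining verifications are routine.
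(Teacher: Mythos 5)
Your proof is correct; the chromatic-number half coincides with the paper's (both iterate Proposition~\ref{bocd}(i) in the form $\chi(\delta(G))\ge\log_2\chi(G)$ along the chain $\vec{K}_n,\delta(\vec{K}_n),\dots,\delta^{k-1}(\vec{K}_n)$), but the odd-girth half takes a genuinely different route. The paper stays inside its adjoint-functor framework: given an orientation $C$ of an odd cycle with $C\rightarrow\delta(S(n,k-1))$, adjunction yields $\delta_L(C)\rightarrow S(n,k-1)$; since $C\not\rightarrow K_2\simeq\delta(K_2)$, the digraph $\delta_L(C)$ is non-bipartite, and since $C$ maps into an acyclic digraph it has sources and sinks, whose copies of $\vec{P}_1$ are pendant in $\delta_L(C)$, so $\delta_L(C)$ contains a strictly shorter odd cycle; induction finishes. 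You instead project an odd closed walk of $\delta(G)$ down to $G$ by hand, tracking the shared endpoint of consecutive arcs and counting direction changes. This is in effect an element-level unfolding of the same adjunction $\delta_L\dashv\delta$: your ``stalls'' play the role of the pendant paths at sources and sinks, and your parity count replaces the observation that $\delta_L(C)$ has as many arcs as $C$ has vertices. Your version buys self-containedness (no need to construct $\delta_L$ or analyse its shape) at the cost of the bookkeeping, which does check out: $z_i=z_{i+1}$ exactly at direction changes, the number of changes in a cyclic binary word is even, $L-2s$ stays odd and positive so it bounds the odd girth of $G$, and acyclicity, correctly propagated from $\vec{K}_n$ through $\delta$, excludes $s=0$. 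The paper's version buys brevity and makes its expository point: the girth lower bound comes from the left adjoint of $\delta$ exactly as the chromatic lower bound comes from its right adjoint.
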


\begin{proof}
We have $\chi(\vec{K}_n) = n$, hence the iterated logarithmic lower bounds on
$\chi(S(n,k))$ follow from Proposition~\ref{bocd}, that is, from the right
adjoint of $\delta$. We now show that the absence
of short odd cycles follows from the left adjoint of $\delta$.
Suppose that some orientation $C$ of an odd cycle admits a 
homomorphism to $S(n,k) = \delta(S(n,k-1))$, for some $k > 1$.
Then $\delta_L(C)$ admits a homomorphism to $S(n,k-1)$,
where $\delta_L$ is the left adjoint of $\delta$. By construction, 
the number of arcs in $\delta_L(C)$ is equal to the number of vertices
in $C$. Moreover, since $C$ admits a homomorphism to $S(n,k)$ which does not
contain an oriented cycle, $C$ must contain a source and a sink. The copies
of $\vec{P}_1$ corresponding to sources and sinks of $C$ are pendant;
therefore any odd cycle of $\delta_L(C)$ has fewer arcs than $C$.
Since we have $C \not \rightarrow \delta(K_2) \simeq K_2$, 
we get $\delta_L(C) \not \rightarrow K_2$ whence $\delta_L(C)$
indeed has an odd cycle. The smallest odd cycle in $\vec{K}_n$ has
three vertices; iteratively, this implies that the smallest
odd cycle in $S(n,k)$ has at least $2k+1$ vertices.
\end{proof}

\section{Path functors and circular graphs} \label{pf}
The natural generalization of Proposition~\ref{mcgprecise} is the following.
\begin{prop} \label{multomul}
Let $L: \mathcal{A} \rightarrow \mathcal{B}$ and
$R: \mathcal{B} \rightarrow \mathcal{A}$ be thin adjoint
functors such that $L(G \times H) \leftrightarrow L(G) \times L(H)$.
If $K$ is multiplicative in $\mathcal{B}$, then  
$R(K)$ is multiplicative in $\mathcal{A}$. Moreover,
if $L(R(G)) \leftrightarrow G$ for all $G \in \mathcal{B}$,
then the converse holds.
\end{prop}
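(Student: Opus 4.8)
The plan is to prove both statements by a purely formal diagram chase, using only the thin adjunction $L(G) \to H \Leftrightarrow G \to R(H)$ together with the product-preservation hypothesis $L(G \times H) \leftrightarrow L(G) \times L(H)$. The argument mirrors the proof of Proposition~\ref{mcg}: there $L = \delta$, $R = \delta_R$, and the product-preservation property is the identity $\delta(G) \times \delta(H) \leftrightarrow \delta(G \times H)$.

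For the first (unconditional) statement I would start with arbitrary $G, H \in \mathcal{A}$ satisfying $G \times H \to R(K)$ and aim to conclude $G \to R(K)$ or $H \to R(K)$. Applying the adjunction to the hypothesis gives $L(G \times H) \to K$, and product-preservation rewrites this as $L(G) \times L(H) \to K$. The multiplicativity of $K$ in $\mathcal{B}$ then yields $L(G) \to K$ or $L(H) \to K$, and a second application of the adjunction translates these back to $G \to R(K)$ or $H \to R(K)$, as required.

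For the converse, under the extra assumption $L(R(G)) \leftrightarrow G$, I would take $G, H \in \mathcal{B}$ with $G \times H \to K$ and feed $R(G) \times R(H)$ into the multiplicativity of $R(K)$. The first step is to verify $R(G) \times R(H) \to R(K)$: by the adjunction this is equivalent to $L(R(G) \times R(H)) \to K$, which by product-preservation and the hypothesis $L(R(\cdot)) \leftrightarrow (\cdot)$ becomes $G \times H \to K$, true by assumption. Multiplicativity of $R(K)$ in $\mathcal{A}$ then gives $R(G) \to R(K)$ or $R(H) \to R(K)$. Finally the adjunction turns $R(G) \to R(K)$ into $L(R(G)) \to K$, and the hypothesis $L(R(G)) \leftrightarrow G$ identifies this with $G \to K$; symmetrically for $H$. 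Hence $G \to K$ or $H \to K$, establishing multiplicativity of $K$.

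I do not anticipate a genuine obstacle, since every step is a direct rewriting; the only point deserving care is the bookkeeping of which hypothesis powers which step. Product-preservation is used in both directions, whereas the condition $L(R(G)) \leftrightarrow G$ is exactly what drives the converse: it is invoked both to deduce $R(G) \times R(H) \to R(K)$ from $G \times H \to K$ and to pull $R(G) \to R(K)$ back to $G \to K$. Without it the converse chain breaks, because $L(R(G)) \to K$ need no longer be equivalent to $G \to K$, and this is precisely why the forward implication holds in general but the converse requires the extra assumption.
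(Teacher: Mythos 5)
Your proof is correct and is essentially the argument the paper intends: the paper states Proposition~\ref{multomul} without spelling out a proof, presenting it as the natural generalization of its proof of Proposition~\ref{mcg}, and your formal chase (adjunction, product preservation, multiplicativity of $K$, adjunction back, with $L(R(G))\leftrightarrow G$ driving both steps of the converse) is exactly that generalization. No gaps.
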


The condition $L(G \times H) \leftrightarrow L(G) \times L(H)$ is satisfied whenever
$L$ is a central Pultr functor which admits a right adjoint. In this section,
we examine the applications of Proposition~\ref{multomul} to functors
associated to the template $\T(3)$ of Section~\ref{af} and its natural 
generalizations.

\subsection{The multiplicativity of odd cycles}
The functor $\Gamma_{\T(3)}: \mathcal{G} \rightarrow \mathcal{G}$ of Section~\ref{af}
admits the right adjoint $\Omega_{\T(3)}: \mathcal{G} \rightarrow \mathcal{G}$.
Therefore, using the fact that $K_3$ is multiplicative in $\mathcal{G}$,
we first get that $\Omega_{\T(3)}(K_3)$ is also multiplicative in $\mathcal{G}$. 
Then, recursively, the graphs $\Omega_{\T(3)}^n(K_3)$ are all multiplicative
in $\mathcal{G}$. It turns out that $\Omega_{\T(3)}^n(K_3)$ is homomorphically
equivalent to the odd cycle $C_{3^{n+1}}$ with $3^{n+1}$ vertices. So this sequence
of derivations yields particular cases of the following result of H\"{a}ggkvist, Hell,
Miller and Neumann Lara.
\begin{theorem}[\cite{HHMN:Mult}] \label{ocmul}
The odd cycles are all multiplicative in $\mathcal{G}$.
\end{theorem}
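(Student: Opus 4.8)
The plan is to separate the cheap cases, handled by the functorial machinery already developed, from the general case, which needs a direct winding-number argument. First I would record what Proposition~\ref{multomul} gives for free: since $C_3 = K_3$ is multiplicative in $\mathcal{G}$ (Theorem~\ref{k3mul}) and $(\Gamma_{\T(3)},\Omega_{\T(3)})$ is a thin adjoint pair whose left member is a central Pultr functor (so $\Gamma_{\T(3)}(G\times H)\leftrightarrow\Gamma_{\T(3)}(G)\times\Gamma_{\T(3)}(H)$), applying Proposition~\ref{multomul} recursively shows that each $\Omega_{\T(3)}^n(K_3)$, which is homomorphically equivalent to $C_{3^{n+1}}$, is multiplicative. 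This settles the theorem for the odd cycles of length $3^{n+1}$; but these form a very sparse subfamily, and the functorial route does not reach, for instance, $C_5$ or $C_7$, so a genuinely different argument is needed for a general odd length $n=2k+1$.

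For the general case I would pass to exponential graphs. Recall from Section~\ref{af} that $G\times H\to C_n$ if and only if $G\to C_n^H$, where $C_n^H=\Gamma_{\T(\times,H)}(C_n)$, and that the counit of the adjunction gives $C_n^H\times H\to C_n$. Taking $G=C_n^H$ then shows that $C_n$ is multiplicative in $\mathcal{G}$ if and only if $C_n^H\to C_n$ for every graph $H$ with $H\nrightarrow C_n$: if this holds, any $G\times H\to C_n$ with $H\nrightarrow C_n$ yields $G\to C_n^H\to C_n$; conversely, multiplicativity applied to the counit $C_n^H\times H\to C_n$ forces $C_n^H\to C_n$ because $H\nrightarrow C_n$. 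So it suffices to construct, for each such $H$, a homomorphism $\Phi\colon C_n^H\to C_n$.

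To build $\Phi$ I would use winding numbers. One may assume $H$ connected and non-bipartite, since a bipartite graph maps to $K_2\to C_n$. Identifying $V(C_n)$ with $\mathbb{Z}_n$, a vertex of $C_n^H$ is a map $f\colon V(H)\to\mathbb{Z}_n$, and adjacency $f\sim f'$ means that for every edge $\{x,y\}$ of $H$ both $f(x)-f'(y)$ and $f'(x)-f(y)$ lie in $\{+1,-1\}\pmod n$. The goal is to attach to each $f$ a single value $\Phi(f)\in\mathbb{Z}_n$ — morally the average winding of $f$ around $H$, read in the universal cover of $C_n$ — so that $f\sim f'$ forces $\Phi(f)-\Phi(f')\equiv\pm1$. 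The hypothesis $H\nrightarrow C_n$ is what should make the winding rigid enough to be well defined modulo $n$ and to change by exactly one step across each edge of $C_n^H$.

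The hard part will be the construction and verification of $\Phi$. The obstruction to $X\to C_n$ is genuinely circular-chromatic and is \emph{not} detected by odd girth alone — the Petersen graph has odd girth $5$ yet does not map to $C_5$ — so one cannot simply read $\Phi(f)$ off short odd walks; the invariant must encode the full circular displacement of $f$, and one must show that the ambiguities in defining it collapse to multiples of $n$ precisely because $H$ admits no homomorphism to $C_n$. I expect this verification, namely that the winding invariant is single-valued in $\mathbb{Z}_n$ and edge-respecting, to be the crux, with the non-bipartiteness of $H$ and the failure $H\nrightarrow C_n$ both entering in an essential way.
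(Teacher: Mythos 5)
There is a genuine gap: your argument stops exactly where the theorem begins. The reduction you give --- $C_n$ is multiplicative if and only if $C_n^H \rightarrow C_n$ for every $H \not\rightarrow C_n$ --- is correct, but it is only a reformulation of multiplicativity via the exponential graph; all of the content lies in actually producing the homomorphism $\Phi$, and you explicitly defer that (``I expect this verification \dots\ to be the crux''). Moreover, the sketch of $\Phi$ as ``the average winding of $f$ around $H$'' does not get off the ground as stated: a vertex of $C_n^H$ is an \emph{arbitrary} map $f\colon V(H)\rightarrow \mathbb{Z}_n$, not a homomorphism (indeed $f$ is a homomorphism of $H$ to $C_n$ precisely when it carries a loop, which is excluded by $H\not\rightarrow C_n$), so ``the winding of $f$ around $H$'' is undefined for a single vertex. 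Any workable invariant must be extracted from how neighbouring vertices of $C_n^H$ interact along closed odd walks of $H$, and showing it is single-valued modulo $n$ and shifts by $\pm 1$ across edges is precisely the hard combinatorial work of El-Zahar--Sauer and of \cite{HHMN:Mult}; nothing in your proposal supplies it.

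Separately, your premise that ``a genuinely different argument is needed'' beyond the functorial machinery is false in the context of this paper. Iterating $\Omega_{\T(3)}$ alone indeed reaches only $C_{3^{n+1}}$, but the generalized templates $\T(2k+1)$ close the gap: $\Omega_{\T(2k+1)}(K_3)$ and $\Omega_{\T(3)}(C_{2k+1})$ are both isomorphic to $C_{3(2k+1)}$. The first identification, combined with Theorem~\ref{k3mul} and Proposition~\ref{multomul}, yields the multiplicativity of $C_{3(2k+1)}$; the second, combined with Lemma~\ref{compid} (which provides the hypothesis $\Gamma_{\T(3)}(\Omega_{\T(3)}(G))\leftrightarrow G$ needed for the \emph{converse} direction of Proposition~\ref{multomul}), pulls multiplicativity back from $C_{3(2k+1)}$ to $C_{2k+1}$. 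That two-step functorial derivation is the paper's entire proof, and it covers every odd cycle, including $C_5$ and $C_7$.
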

The proof given in \cite{HHMN:Mult} is an adaptation of the proof
of Theorem~\ref{k3mul} given in \cite{EZaSau:Chro}. 
However it is possible to derive the multiplicativity
of all odd cycles from that of $K_3$ using the following
adjoint functors that generalize
$\Gamma_{\T(3)}$ and $\Omega_{\T(3)}$:
Let $\T(2k+1) =(K_1,P_{2k+1},\epsilon_1,\epsilon_2)$, 
where $P_{2k+1}$ is the path with $2k+1$ edges and $\epsilon_1, \epsilon_2$ 
map $K_1$ to the endpoints of $P_{2k+1}$.
The right adjoint of $\Gamma_{\T(2k+1)}$ is $\Omega_{\T(2k+1)}$ is characterized
in \cite{Haj:On-colorings}. It is defined as follows.
\begin{itemize}
\item The vertices of $\Omega_{\T(2k+1)}(G)$ are the $(k+1)$-tuples 
  $(A_0, \ldots, A_k)$ such that each $A_i\subseteq V(G)$;
  $A_0$ is a singleton;
  every vertex of $A_{i-1}$ is adjacent to every vertex of $A_i$ for $i = 1, \ldots, k$;
  and $A_{i-1}$ is contained in $A_{i+1}$ for $i = 1, \ldots, k-1$.
\item The edges of $\Omega_{\T(2k+1)}(G)$ join pairs $(A_0, \ldots, A_k)$,
$(B_0, \ldots, B_k)$ such that $A_{i-1} \subseteq B_{i}$ and
$B_{i-1} \subseteq A_i$ for $i = 1, \ldots, k$, and every
vertex of $A_k$ is adjacent to every vertex of $B_k$.
\end{itemize}
To prove Theorem~\ref{ocmul} with these functors, we use the following.
\begin{lemma}[\cite{HaT:Graph-powers}] \label{compid}
$\Gamma_{\T(n)}(\Omega_{\T(n)}(G)) \leftrightarrow
G \leftrightarrow \Gamma_{\T(n)}(\Lambda_{\T(n)}(G))$
for any odd $n$ and $G \in \mathcal{G}$.
\end{lemma}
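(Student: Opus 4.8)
The plan is to prove both homomorphic equivalences by one categorical reduction followed by a single explicit construction. Abbreviate $\Lambda = \Lambda_{\T(n)}$, $\Gamma = \Gamma_{\T(n)}$, $\Omega = \Omega_{\T(n)}$, and use the two thin adjunctions already recorded in Section~\ref{af}: the Pultr adjunction $\Lambda(X) \rightarrow Y \Leftrightarrow X \rightarrow \Gamma(Y)$ and the adjunction $\Gamma(X) \rightarrow Y \Leftrightarrow X \rightarrow \Omega(Y)$. Taking $X = G$, $Y = \Lambda(G)$ in the first, the trivial arrow $\Lambda(G) \rightarrow \Lambda(G)$ yields the unit $G \rightarrow \Gamma(\Lambda(G))$; taking $X = \Omega(G)$, $Y = G$ in the second, the trivial arrow $\Omega(G) \rightarrow \Omega(G)$ yields the counit $\Gamma(\Omega(G)) \rightarrow G$. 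These are the two ``easy'' halves and hold for all $G$ and all $n$. For the reverse halves, the second adjunction at $X = \Lambda(G)$, $Y = G$ turns $\Gamma(\Lambda(G)) \rightarrow G$ into $\Lambda(G) \rightarrow \Omega(G)$, while the first adjunction at $X = G$, $Y = \Omega(G)$ turns $G \rightarrow \Gamma(\Omega(G))$ into exactly the same statement. Thus the whole lemma collapses to the single claim that $\Lambda_{\T(n)}(G) \rightarrow \Omega_{\T(n)}(G)$ for odd $n = 2k+1$.

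To build this homomorphism $\phi$, recall that $\Lambda_{\T(2k+1)}(G)$ replaces each edge $\{a,b\}$ of $G$ by a path $a = w_0, w_1, \dots, w_{2k+1} = b$. I would send each original vertex $c$ of $G$ to the alternating tuple $\iota(c) = (\{c\}, N(c), \{c\}, N(c), \dots)$ of length $k+1$, which one checks is a vertex of $\Omega_{\T(2k+1)}(G)$; since it depends only on $c$, it is consistent across all edges at $c$. On the interior of a subdivided edge I would use the proper $2$-colouring $\gamma(w_i) = a$ for $i$ even, $\gamma(w_i) = b$ for $i$ odd: the oddness of $n$ is precisely what forces the two endpoints to receive the distinct adjacent colours $a \sim b$. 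An interior vertex $w_i$ in the first half ($1 \le i \le k$) is sent to $\phi(w_i) = (A_0, \dots, A_k)$ with $A_j = \{\gamma(w_{i-j})\}$, the singleton replaced by $N(a)$ once the index runs off the $a$-end; a vertex in the second half ($k+1 \le i \le 2k$) is treated symmetrically, reading forwards towards $b$ and buffering with $N(b)$. One checks this is orientation-independent, so $\phi$ is well defined.

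I would then verify that $\phi$ is a homomorphism, i.e.\ that consecutive images $\phi(w_i) = (A_0,\dots,A_k)$ and $\phi(w_{i+1}) = (B_0,\dots,B_k)$ meet the edge conditions of $\Omega_{\T(2k+1)}(G)$: the interleaving inclusions $A_{j-1} \subseteq B_j$ and $B_{j-1} \subseteq A_j$ for $1 \le j \le k$, together with completeness of $A_k$ to $B_k$. The main obstacle is this verification, which splits into the generic interior step, the two junctions where an $N(a)$ or $N(b)$ entry meets a singleton, the middle step $w_k \to w_{k+1}$ where the backward and forward conventions meet, and the two end steps where an interior tuple meets $\iota(a)$ or $\iota(b)$. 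Each case is routine, but the boundary bookkeeping is delicate, and it is exactly at the boundaries that oddness is used: for even $n$ the analogous colouring would give equal endpoint colours and the back-and-forth walks would create loops, so the construction, and indeed the lemma, fails in general. As a cross-check I would note that, via the first adjunction, the same data reads directly as $G \rightarrow \Gamma(\Omega(G))$, since $\iota(a) = \phi(w_0), \phi(w_1), \dots, \phi(w_{2k+1}) = \iota(b)$ is a walk of length $2k+1$ in $\Omega_{\T(2k+1)}(G)$ witnessing an edge of $\Gamma_{\T(2k+1)}(\Omega_{\T(2k+1)}(G))$ between $\iota(a)$ and $\iota(b)$.
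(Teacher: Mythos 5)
The paper itself offers no proof of Lemma~\ref{compid} --- it is imported from \cite{HaT:Graph-powers} --- so there is no in-paper argument to compare against; I am judging your proposal on its own. Your reduction is exactly the right move: the unit $G \rightarrow \Gamma(\Lambda(G))$ and counit $\Gamma(\Omega(G)) \rightarrow G$ are formal consequences of the two thin adjunctions, and both remaining arrows transpose (one along $\Lambda \dashv \Gamma$, one along $\Gamma \dashv \Omega$) to the single claim $\Lambda_{\T(n)}(G) \rightarrow \Omega_{\T(n)}(G)$, which is where the combinatorics and the oddness of $n$ must enter. Your explicit map does work --- I checked the five classes of edges you list for $n=3$ and $n=5$ and the general pattern persists --- but one phrase must not be taken literally: ``the singleton replaced by $N(a)$ once the index runs off the $a$-end'' cannot mean that \emph{every} out-of-range entry becomes $N(a)$. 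Two consecutive entries equal to $N(a)$ would violate the adjacency requirement between consecutive coordinates ($N(a)$ is not completely joined to itself unless $a$'s neighbourhood is a clique), and the containment $A_{j-1}\subseteq A_{j+1}$ would then demand $\{a\}\subseteq N(a)$, i.e.\ a loop at $a$. The tail has to continue the alternating pattern of your $\iota(a)$, with $N(a)$ at the odd virtual positions and $\{a\}$ at the even ones; with that reading the tuple is a genuine vertex of $\Omega_{\T(n)}(G)$, the map is orientation-independent as you claim, and all the boundary cases go through. This is a presentational fix rather than a gap, and your closing observation that the image of a subdivided edge is a walk of length $n$ in $\Omega_{\T(n)}(G)$, hence directly exhibits $G \rightarrow \Gamma(\Omega(G))$, is a correct and reassuring cross-check.
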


Theorem~\ref{ocmul} can then be proved as follows.
We note that $\Omega_{\T(2k+1)}(K_3)$ and $\Omega_{\T(3)}(C_{2k+1})$
are both isomorphic to the odd cycle $C_{3(2k+1)}$.
Therefore by Proposition~\ref{multomul}, the multiplicativity of $C_{3(2k+1)}$
follows from the multiplicativity of $K_3$, and the multiplicativity of 
$C_{2k+1}$ follows from the multiplicativity of $C_{3(2k+1)}$.

\subsection{Circular complete graphs and the circular chromatic number}

We now extend Theorem~\ref{ocmul} to some ``circular complete graphs'' defined as follows.
Let $\mathbb{Z}_s = \{0, 1, \ldots, s-1\}$ denote the cyclic group with $s$ elements.
For integers $r, s$ such that $1 \leq r \leq s/2$, the {\em circular complete graph}
$K_{s/r}$ is the graph with vertex set $\mathbb{Z}_s$ and with edges
$\{x, y\}$ such that $y-x \in \{r, r+1, \ldots, s-r\}$. In particular,
$K_{s/1}$ is the complete graph $K_s$, and $K_{(2r+1)/r}$ is the odd
cycle $C_{2r+1}$.

The conventional notation $K_{s/r}$ is slightly ambiguous, since it confuses
two integer parameters $s$ and $r$ with a single rational parameter $s/r$.
We will write $s/r$ for the parameter pair and $\frac{s}{r}$ for the corresponding
fraction. The notation is motivated by the following result,
which alleviates some of its ambiguity. 
\begin{lemma}[\cite{BoHe:star}] \label{phcg}
\begin{itemize}
\item[]
\item[(i)] $K_{s/r} \rightarrow K_{s'/r'}$ if and only if 
$\frac{s}{r} \leq \frac{s'}{r'}$.
\item[(ii)] For any $\frac{s}{r} > 2$, there exists 
$\frac{s'}{r'} < \frac{s}{r}$ such
that $K_{s/r}\setminus\{x\} \rightarrow K_{s'/r'}$
for all $x \in V(K_{s/r})$.
\end{itemize}
\end{lemma}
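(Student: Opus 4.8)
The plan is to route both parts through the circular chromatic number $\chic$, using two standard facts: (a) for every graph~$G$, $G\to K_{s/r}$ if and only if $\chic(G)\le\frac{s}{r}$ (this is essentially the defining property of circular complete graphs, and for finite~$G$ the optimal fraction is attained); and (b) both $\chic$ and the fractional chromatic number $\chi_f$ are monotone under homomorphisms, with $\chi_f\le\chic$. Throughout I take $\gcd(s,r)=1$: otherwise $K_{s/r}$ is only homomorphically equivalent to its reduced form, and part~(ii) genuinely requires the reduced form (see the obstacle below).

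For part~(i), the implication $\frac{s}{r}\le\frac{s'}{r'}\Rightarrow K_{s/r}\to K_{s'/r'}$ I would prove by exhibiting the explicit scaling map $f(i)=\lfloor i s'/s\rfloor$ on $\{0,\dots,s-1\}$. Writing an edge of $K_{s/r}$ as a pair $i<j$ with $j-i\in\{r,\dots,s-r\}$, the elementary floor inequalities $\lfloor b\rfloor-\lfloor a\rfloor\ge\lfloor b-a\rfloor$ and $\lfloor b\rfloor-\lfloor a\rfloor\le\lceil b-a\rceil$ reduce edge preservation to the single inequality $\lfloor rs'/s\rfloor\ge r'$, which holds precisely when $sr'\le s'r$, i.e.\ when $\frac{s}{r}\le\frac{s'}{r'}$. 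The converse I would obtain from $\chi_f$: since $K_{s/r}$ is vertex-transitive with independence number~$r$ (an independent set has pairwise circular distance $<r$, hence lies in an arc spanning fewer than~$r$ steps and has at most~$r$ vertices, while $\{0,\dots,r-1\}$ attains this), we get $\chi_f(K_{s/r})=s/r$; monotonicity of $\chi_f$ then forces $\frac{s}{r}=\chi_f(K_{s/r})\le\chi_f(K_{s'/r'})=\frac{s'}{r'}$ whenever a homomorphism exists.

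For part~(ii), vertex-transitivity lets me take $x=0$. The whole statement reduces to the single inequality $\chic(K_{s/r}\setminus\{0\})<\frac{s}{r}$: once this is known, pick any rational $\frac{s'}{r'}$ with $\chic(K_{s/r}\setminus\{0\})\le\frac{s'}{r'}<\frac{s}{r}$; such a fraction exists, and is $\ge 2$ because $K_{s/r}\setminus\{0\}$ still carries an edge (as $\frac{s}{r}>2$), so $K_{s'/r'}$ is a genuine circular complete graph and by fact~(a) the homomorphism $K_{s/r}\setminus\{0\}\to K_{s'/r'}$ exists. Now $\chic(K_{s/r})=\frac{s}{r}$ (the upper bound is the identity map, the lower bound is $\chic\ge\chi_f=\frac{s}{r}$ from part~(i)), and since $K_{s/r}\setminus\{0\}$ is a subgraph, monotonicity of $\chic$ gives $\chic(K_{s/r}\setminus\{0\})\le\frac{s}{r}$.

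The crux, and the step I expect to be the main obstacle, is upgrading this to a \emph{strict} inequality. Here I would invoke the structural theorem of Bondy and Hell that a graph whose circular chromatic number equals $\frac{p}{q}$ in lowest terms must have at least~$p$ vertices. Since $\gcd(s,r)=1$, the fraction $\frac{s}{r}$ is already in lowest terms, so if $\chic(K_{s/r}\setminus\{0\})$ equalled $\frac{s}{r}$ the graph would need at least~$s$ vertices, whereas it has only $s-1$; this contradiction yields the strict drop, and the same conclusion holds for every~$x$ by vertex-transitivity. This is exactly where coprimality is indispensable: for example $K_{10/4}\setminus\{0\}$ still contains the tight five-cycle on $1,5,9,3,7$, so its circular chromatic number remains $\frac{5}{2}$ and no strict decrease occurs. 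I therefore expect the real difficulty of part~(ii) to be concentrated entirely in justifying that for reduced $K_{s/r}$ the deletion of a single vertex destroys \emph{every} circular-chromatic obstruction, which is precisely the content of the vertex-count bound.
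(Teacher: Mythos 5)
The paper does not prove this lemma at all: it is imported verbatim from Bondy and Hell \cite{BoHe:star}, so there is no in-paper argument to compare yours against, and I can only assess your proposal on its own terms. Part~(i) is correct and complete: the scaling map $f(i)=\lfloor is'/s\rfloor$ with the two floor inequalities is the standard construction, and the converse via $\chi_f(K_{s/r})=s/r$ (vertex-transitivity plus $\alpha(K_{s/r})=r$) is sound. The one soft spot is your parenthetical justification that an independent set ``lies in an arc spanning fewer than $r$ steps'': pairwise circular distance $<r$ does not instantly localize the set to a short arc (one has to rule out the configuration where consecutive gaps are all small but the partial sums jump over the forbidden band $\{r,\dots,s-r\}$, which uses $s\geq 2r$); this is a standard fact but deserves a sentence. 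Your observation that part~(ii) is literally false without $\gcd(s,r)=1$ is correct and worth making explicit --- the $C_5$ inside $K_{10/4}\setminus\{0\}$ is a valid counterexample, and the paper's statement silently assumes the reduced form.

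For part~(ii), however, you have produced a correct \emph{reduction} rather than a proof: everything except bookkeeping is delegated to the Bondy--Hell vertex-count bound (``$\chic(G)=p/q$ in lowest terms forces $|V(G)|\geq p$''), and you say yourself that this is where the entire difficulty sits. This is not circular --- the vertex bound has a proof independent of part~(ii), via the tight-cycle argument: in an optimal $(p,q)$-colouring the digraph of tight edges $c(v)-c(u)\equiv q \pmod p$ must contain a directed cycle, whose length is divisible by $p$ when $\gcd(p,q)=1$, whence $|V(G)|\geq p$ --- but you neither supply that argument nor the alternative explicit ``colour contraction'' homomorphism $K_{s/r}\setminus\{x\}\to K_{s'/r'}$ that Bondy and Hell construct. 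As it stands, the crux of (ii) is asserted, not proved; to close the gap you should either include the tight-cycle lemma (that an acyclic tight digraph permits a strict improvement of the colouring) or exhibit the contraction map directly.
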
 

The {\em circular chromatic number} $\chic(G)$ of a graph $G$ is defined as the
infimum of the values $\frac{s}{r}$ such that $G \rightarrow K_{s/r}$. By
Lemma \ref{phcg}, the infimum is attained. We have 
$\chic(G) \leq \chi(G) < \chic(G) + 1$, hence the circular chromatic
number is a refinement of the chromatic number.

As in the case of the chromatic number, the inequality 
$\chic(G\times H) \leq \min \{ \chic(G), \chic(H) \}$ follows
immediately from definitions. Zhu~\cite{Zhu:cirsur} conjectured that equality
always holds. This is a strengthening of Hedetniemi's conjecture.
It is equivalent to the statement that the circular complete graphs
are multiplicative. In this direction, the following is known.

\begin{prop}[\cite{Tar:Mul}] \label{moccg}
The circular complete graphs $K_{s/r}$ with $2 \leq \frac{s}{r} < 4$ are multiplicative.
\end{prop}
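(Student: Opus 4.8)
The plan is to deduce Proposition~\ref{moccg} from the multiplicativity of~$K_3$ and of the odd cycles (Theorems~\ref{k3mul} and~\ref{ocmul}) by transporting multiplicativity along the adjoint pairs $(\Gamma_{\T(2k+1)},\Omega_{\T(2k+1)})$, exactly as in the proof of Theorem~\ref{ocmul}, but now sweeping the whole scale of circular complete graphs. Each $\Gamma_{\T(2k+1)}$ is a central Pultr functor admitting the right adjoint $\Omega_{\T(2k+1)}$, so it satisfies $\Gamma_{\T(2k+1)}(G\times H)\leftrightarrow\Gamma_{\T(2k+1)}(G)\times\Gamma_{\T(2k+1)}(H)$ and Proposition~\ref{multomul} applies with $(L,R)=(\Gamma_{\T(2k+1)},\Omega_{\T(2k+1)})$; moreover Lemma~\ref{compid} gives $\Gamma_{\T(2k+1)}(\Omega_{\T(2k+1)}(G))\leftrightarrow G$, so the converse clause of Proposition~\ref{multomul} is also available. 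Thus for every~$k$ I obtain the two-way implication
\[
K\text{ is multiplicative in }\mathcal{G}\iff\Omega_{\T(2k+1)}(K)\text{ is multiplicative in }\mathcal{G}.
\]

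First I would establish the computational core: that these functors preserve circular complete graphs and act on the parameter by an explicit M\"obius transformation. The $\Gamma$-side is a direct walk count. Since $\Gamma_{\T(2k+1)}(H)$ joins two vertices exactly when they are linked by a walk of length~$2k+1$, and in $K_{s/r}$ such walks realise precisely the differences in the arc $[(2k+1)r,(2k+1)(s-r)]$ modulo~$s$, one gets $\Gamma_{\T(2k+1)}(K_{s/r})\leftrightarrow K_{s/((2k+1)r-ks)}$ whenever this arc does not wrap around, i.e.\ whenever $\frac{s}{r}<\frac{2k+1}{k}$; on circular chromatic numbers this is the map $x\mapsto\frac{x}{(2k+1)-kx}$. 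Feeding this through the adjunction and Lemma~\ref{compid} yields the dual statement that $\Omega_{\T(2k+1)}(K_{s/r})$ is homomorphically equivalent to the circular complete graph with fraction $\frac{(2k+1)s}{ks+r}$, the inverse transform $x\mapsto\frac{(2k+1)x}{kx+1}$. The decisive point---and the reason the statement halts at $\frac{s}{r}<4$---is that these identities are valid only while every fraction in sight stays below~$4$: they already fail at $\frac{s}{r}=4$, since $\chi(\Omega_{\T(3)}(K_4))=4$ by the Gy\'arf\'as--Jensen--Stiebitz computation, whereas the M\"obius formula would give $K_{12/5}$, of chromatic number~$3$. So the method is intrinsically confined to $\chic<4$.

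With this dictionary, the residual task is arithmetic: to connect every $K_{s/r}$ with $2\le\frac{s}{r}<4$, through a finite chain of the equivalences above and of the forward implications $M\text{ mult}\Rightarrow\Omega_{\T(2k+1)}(M)\text{ mult}$, to one of the base fractions $K_2$, $K_3$, or an odd cycle $C_{2j+1}$ (all of circular chromatic number at most~$3$, hence safely inside the range). The engine is the coincidence already exploited for odd cycles, where a single graph $C_{3(2k+1)}$ appears simultaneously as $\Omega_{\T(2k+1)}(K_3)$ and as $\Omega_{\T(3)}(C_{2k+1})$; for a general target one must choose the indices~$k$ so that a combination of $\Omega_{\T(2k+1)}$ (which contracts $\chic$ towards~$2$) and of the $\Gamma$-reductions (which, after cancelling a common factor of $s$ and $(2k+1)r-ks$, can strictly shrink the reduced representation) eventually reaches a base fraction. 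The plan is to run this as an induction on a well-founded measure of $K_{s/r}$, for instance the number of vertices of its reduced form, selecting at each step a functor that both keeps every intermediate fraction in $[2,4)$ and decreases the measure, invoking Lemma~\ref{phcg} to keep track of which fraction each functor produces.

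The hard part will be exactly this covering step. Unlike the odd-cycle case, a general $\frac{s}{r}\in(2,4)$ is not the one-step $\Gamma$- or $\Omega$-image of any odd cycle---for example $\frac{8}{3}$ is neither---so one cannot finish in a single application and must instead control the orbit of the base fractions under the whole semigroup of admissible M\"obius maps. Showing that this orbit, restricted to the safe window $[2,4)$, meets every rational there, while guaranteeing both that the chain terminates and that it never climbs to~$4$ or beyond (which would require the unproven multiplicativity of $K_n$ for $n\ge5$), is where the genuine work lies; the adjoint-functor formalism of the first two paragraphs only reduces Proposition~\ref{moccg} to this combinatorial problem.
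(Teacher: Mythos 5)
Your functorial setup---transferring multiplicativity along the adjoint pairs $(\Gamma_{\T(2k+1)},\Omega_{\T(2k+1)})$ via Proposition~\ref{multomul} and Lemma~\ref{compid}, with the M\"obius action $K_{s/r}\mapsto K_{s/((2k+1)r-ks)}$ on fractions---is exactly the paper's engine (the paper uses only $k=1$, i.e.\ $\Gamma_{\T(3)}(K_{s/r})\leftrightarrow K_{s/(3r-s)}$ for $\frac{s}{r}<3$ and $\Omega_{\T(3)}(\Gamma_{\T(3)}(K_{s/r}))\leftrightarrow K_{s/r}$ for $\frac{s}{r}<\frac{12}{5}$, starting from the odd cycles of Theorem~\ref{ocmul}). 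The gap is your final ``covering'' step, and it is not merely unfinished: it cannot be completed. Every fraction in the orbit of $K_3$ and the odd cycles under your semigroup has a representation with odd numerator ($\Gamma_{\T(2k+1)}$ preserves the numerator $s$, and $\Omega_{\T(2k+1)}$ sends $s$ to $(2k+1)s$), hence has odd numerator in lowest terms; so $K_{8/3}$, say, is never reached, no matter how the chain is chosen or what well-founded measure you induct on. This is why the paper states only that the reachable fractions form a \emph{dense} subset of $(2,4)$ and must treat ``the remaining graphs'' separately.

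The missing idea is a density argument that sidesteps the covering problem. Suppose some $K_{s/r}$ with $2<\frac{s}{r}<4$ fails to be multiplicative, witnessed by $G\times H\rightarrow K_{s/r}$ with $G,H\not\rightarrow K_{s/r}$. By Lemma~\ref{phcg} the circular chromatic numbers of $G$ and $H$ are then strictly larger than $\frac{s}{r}$, so one can pick a fraction $\frac{s''}{r''}$ in the dense reachable set with $\frac{s}{r}<\frac{s''}{r''}<\min\{\chic(G),\chic(H),4\}$. Then $G\times H\rightarrow K_{s/r}\rightarrow K_{s''/r''}$ while $G,H\not\rightarrow K_{s''/r''}$, contradicting the multiplicativity of $K_{s''/r''}$. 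This step leans essentially on the linear order of the circular complete graphs (Lemma~\ref{phcg}(i)) and on the attainment of the infimum defining $\chic$; it is what you need to add in place of your induction, which as written would stall on fractions like $\frac{8}{3}$.
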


The proof uses the functors $\Gamma_{\mathcal{T}(3)}$ and $\Omega_{\mathcal{T}(3)}$
as follows.
\begin{itemize}
\item[(i)] When $\frac{s}{r} < 3$, we have 
$\Gamma_{\mathcal{T}(3)}(K_{s/r}) \leftrightarrow K_{s/(3r-s)}$.
\item[(ii)] When $\frac{s}{r} < \frac{12}{5}$, we have
$\Omega_{\mathcal{T}(3)}(\Gamma_{\mathcal{T}(3)}(K_{s/r})) \leftrightarrow K_{s/r}$.
\item[(iii)] Therefore by Proposition~\ref{multomul},
when $\frac{s}{r} < \frac{12}{5}$,
$K_{s/r}$ is multiplicative if and only if  $K_{s/(3r-s)}$ is multiplicative.
\item[(iv)] Starting with the odd cycles $C_{2n+1} = K_{(2n+1)/n}$ which 
are multiplicative by Proposition~\ref{ocmul}, we can use item (iii) above to infer
the multiplicativity of many circular complete graphs 
$K_{s/r} = \Gamma_{\mathcal{T}(3)}^m C_{2n+1}$. The set of rationals $\frac{s}{r}$
for which $K_{s/r}$ is proved to be multiplicative in this fashion is
a dense subset of the interval $(2,4)$. 
\item[(v)] The remaining graphs $K_{s/r}$, $2 < \frac{s}{r} < 4$
are proved to be multiplicative by a density argument: Suppose that
$$\frac{s'}{r'} = \chic\{G\times H\} < \min \{\chic(G), \chic(H)\} = \frac{s}{r}.$$
By item (iv) above there exists  $\frac{s'}{r'} < \frac{s''}{r''} < \frac{s}{r}$
such that $K_{s''/r''}$ is multiplicative. 
But we have $G\times H \rightarrow K_{s'/r'} \rightarrow K_{s''/r''}$
while $G, H \not \rightarrow K_{s''/r''}$, a contradiction.
\end{itemize}

\begin{corollary}
The identity $\chic(G\times H) = \min \{\chic(G), \chic(H)\}$
holds whenever $\min \{\chic(G), \chic(H)\} \leq 4$.
\end{corollary}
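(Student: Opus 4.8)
The plan is to reduce the identity to the multiplicativity statement of Proposition~\ref{moccg} via the same density argument used in item~(v) of its proof. Since the inequality $\chic(G\times H)\le\min\{\chic(G),\chic(H)\}$ holds by definition, it suffices to establish the reverse inequality under the hypothesis $\min\{\chic(G),\chic(H)\}\le 4$. Write $\beta=\min\{\chic(G),\chic(H)\}$ and recall that $\chic$ takes values in $[2,\infty]$, because the circular complete graphs $K_{s/r}$ are only defined for $\frac{s}{r}\ge 2$; by Lemma~\ref{phcg} the value $\chic(G\times H)$ is attained, say $\chic(G\times H)=\frac{s'}{r'}$ with $G\times H\rightarrow K_{s'/r'}$. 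If $\beta=2$ there is nothing to prove, since then $2\le\chic(G\times H)\le\beta=2$; so I would assume $2<\beta\le 4$.

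Suppose for contradiction that $\frac{s'}{r'}=\chic(G\times H)<\beta$. Since $\frac{s'}{r'}\ge 2$ and $\beta\le 4$, the nonempty open interval $\left(\frac{s'}{r'},\beta\right)$ is contained in $(2,4)$ and therefore contains a rational $\frac{s''}{r''}$. By Proposition~\ref{moccg} the graph $K_{s''/r''}$ is multiplicative. Moreover $\frac{s'}{r'}<\frac{s''}{r''}$, so Lemma~\ref{phcg}(i) gives $K_{s'/r'}\rightarrow K_{s''/r''}$, whence $G\times H\rightarrow K_{s''/r''}$. Multiplicativity now forces $G\rightarrow K_{s''/r''}$ or $H\rightarrow K_{s''/r''}$; by Lemma~\ref{phcg}(i) and the definition of the circular chromatic number, either alternative yields $\min\{\chic(G),\chic(H)\}\le\frac{s''}{r''}<\beta$, contradicting $\min\{\chic(G),\chic(H)\}=\beta$. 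Hence $\chic(G\times H)=\beta$, as required.

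I expect the argument to present no serious obstacle, as it merely isolates the density step already carried out for the interval $(2,4)$ in the proof of Proposition~\ref{moccg}. The only points requiring care lie at the boundary: first, one must use $\chic(G\times H)\ge 2$ to guarantee that the interpolating value $\frac{s''}{r''}$ can be chosen inside $(2,4)$, where Proposition~\ref{moccg} applies; and second, when $\beta=4$ one must select the multiplicative $K_{s''/r''}$ strictly below~$4$, which is possible precisely because the multiplicative range in Proposition~\ref{moccg} is the half-open interval $[2,4)$. A final routine check, which I would relegate to a remark, is that graphs carrying a loop (so that $\chic=\infty$) do not affect the statement: if, say, $H$ has a loop at a vertex $v$, then $G$ embeds into $G\times H$ through $u\mapsto(u,v)$, giving $\chic(G\times H)\ge\chic(G)=\min\{\chic(G),\chic(H)\}$ directly.
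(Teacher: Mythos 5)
Your proposal is correct and takes essentially the same route as the paper: the corollary is stated there as an immediate consequence of Proposition~\ref{moccg}, and the density argument you spell out is precisely item~(v) of that proposition's proof, transplanted to the interval $\bigl(\chic(G\times H),\min\{\chic(G),\chic(H)\}\bigr)\subseteq(2,4)$. Your boundary checks (the case $\beta=4$, the lower bound $\chic\ge 2$, and loops) are sound and consistent with the paper's implicit reasoning.
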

It turns out that the multiplicativity of $K_3$ implies that
of all the graphs $K_{s/r}: 2 < \frac{s}{r} < 4$. Up to homomorphic
equivalence, these are the only graphs known 
to be multiplicative (other than $K_1$ and $K_2$). 
The proof method exposed here does not extend to other
circular complete graphs. Property (i) breaks down at $\frac{s}{r} = 3$,
since $\Gamma_{\mathcal{T}(3)}(K_{s/r})$ contains loops whenever
$\frac{s}{r} \geq 3$. Property (ii) breaks down at $\frac{12}{5}$, we have
$\Gamma_{\mathcal{T}(3)}(K_{12/5}) \leftrightarrow K_{4}$, and
$\Omega_{\mathcal{T}(3)}(K_4)$ is $4$-chromatic by the result of
\cite{GyaJenSti:On-graphs-with} discussed in Section~\ref{af}.

\subsection{Compositions and chains of functors} For odd integers $m, n$,
we consider the functor 
$L^m_n = \Gamma_{\mathcal{T}(m)}\circ \Lambda_{\mathcal{T}(n)}$ 
and its right adjoint 
$R^n_m = \Gamma_{\mathcal{T}(n)}\circ \Omega_{\mathcal{T}(m)}$.
In particular, the circular complete graphs $K_{s/r}$ with
$s$ an odd multiple of $3$ are the graphs $R^m_n(K_3)$
($\simeq L^m_n(K_3)$) such that $m < 3n$. More generally,
for any odd cycle $C_k$ with $k \geq \frac{m}{n}$, 
we have $L^m_n(C_{2k+1}) \leftrightarrow R^m_n(C_{2k+1}) 
\leftrightarrow K_{s/r}$, where $s= nk$ and $r = \frac{nk-m}{2}$.

As noted in \cite{HaT:Graph-powers}, the 
circular chromatic number of $G$ can therefore be expressed 
in terms of the infimum of the values $\frac{m}{n}$ such that
$G \rightarrow R^m_n(K_3)$. Since $R^m_n$ is the thin right
adjoint of $L^n_m$, we can look instead at the values
$\frac{m}{n}$ such that $L^n_m(G)$ admits a homomorphism to $K_3$. 
In this characterization, the ``base graph'' $K_3$ could 
be replaced by any of the graphs $K_{s/r}$ with $2 < \frac{s}{r} < 4$,
with an appropriate change of parameters. However, replacing
$K_3$ by $K_4$ or other graphs yields new graph invariants,
defined by homomorphisms into chains of graphs which mimic
the circular complete graphs.

At the functorial level,
we can order thin functors in an obvious way, putting $F \leq F'$
if $F(G) \rightarrow F'(G)$ for every $G \in \mathcal{G}$.
\begin{prop} \label{pcaf}
$L^m_n \leq L^{m'}_{n'}$ if and only if $\frac{m}{n} \leq \frac{m'}{n'}$ and
similarly $R^m_n \leq R^{m'}_{n'}$ if and only if $\frac{m}{n} \leq \frac{m'}{n'}$.
\end{prop}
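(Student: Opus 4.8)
The plan is to prove that, up to the functor ordering $\le$, each family depends only on the ratio $\frac{m}{n}$; the key is a \emph{scale invariance} that reduces the comparison to a common denominator, after which it becomes elementary.

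\emph{Reduction to a common denominator.} I would first record the composition laws for the path functors. Since a walk of length $ab$ is exactly an $a$-fold concatenation of walks of length $b$, one has $\Gamma_{\T(ab)}=\Gamma_{\T(a)}\circ\Gamma_{\T(b)}$; iterating subdivisions gives $\Lambda_{\T(ab)}=\Lambda_{\T(a)}\circ\Lambda_{\T(b)}$; and taking the right adjoint of a composite gives $\Omega_{\T(ab)}\leftrightarrow\Omega_{\T(b)}\circ\Omega_{\T(a)}$. Substituting the identities $\Gamma_{\T(k)}\circ\Lambda_{\T(k)}\leftrightarrow\id\leftrightarrow\Gamma_{\T(k)}\circ\Omega_{\T(k)}$ of Lemma~\ref{compid} (valid for odd $k$) into these factorisations yields the scale invariances $L^m_n\leftrightarrow L^{mk}_{nk}$ and $R^m_n\leftrightarrow R^{mk}_{nk}$ for every odd $k$. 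Hence, to compare $L^m_n$ with $L^{m'}_{n'}$ (and likewise for $R$) it suffices to treat $n=n'$, after rescaling to the common odd denominator $nn'$; the hypothesis $\frac mn\le\frac{m'}{n'}$ then reads $mn'\le m'n$, all four integers being odd.

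\emph{The ``if'' direction.} For odd $m\le m'$ I claim $\Gamma_{\T(m)}\le\Gamma_{\T(m')}$, with the identity map on vertices as witness: a walk of length $m$ from $x$ to $y$ in a graph $H$ prolongs to one of length $m'$ by appending $\frac{m'-m}{2}$ back-and-forth steps along an edge at $y$ (such an edge exists since $m\ge 1$, and $m'-m$ is a nonnegative even number), so every edge of $\Gamma_{\T(m)}(H)$ is an edge of $\Gamma_{\T(m')}(H)$. Applying this with $H=\Lambda_{\T(n)}(G)$ and with $H=\Omega_{\T(n)}(G)$ gives $L^m_n\le L^{m'}_n$ and $R^m_n\le R^{m'}_n$; combined with the scale invariance above, this settles the ``if'' direction of both equivalences.

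\emph{The ``only if'' direction, and the main obstacle.} For this I would test on odd cycles: for $k\ge\max\{\frac mn,\frac{m'}{n'}\}$ the excerpt gives $L^m_n(C_{2k+1})\leftrightarrow R^m_n(C_{2k+1})\leftrightarrow K_{s/r}$ with $\frac sr$ a strictly increasing function of $\frac mn$ (explicitly $\frac{2(2k+1)}{(2k+1)-m/n}$). Thus $L^m_n\le L^{m'}_{n'}$ forces $K_{s/r}\to K_{s'/r'}$, hence $\frac sr\le\frac{s'}{r'}$ by Lemma~\ref{phcg}(i), hence $\frac mn\le\frac{m'}{n'}$; the identical computation disposes of $R$. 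I expect the real work to be the bookkeeping of the first step: one must insert the telescoping $\Gamma_{\T(k)}\circ\Omega_{\T(k)}\leftrightarrow\id$ at the correct spot inside the composite (which is why $\Omega_{\T(nk)}$ is factored as $\Omega_{\T(k)}\circ\Omega_{\T(n)}$ rather than the reverse) and check that each homomorphic equivalence survives having a further functor applied on the outside. As a shortcut for the $R$-statement, one may instead deduce it from the $L$-statement via the duality $L^m_n\le L^{m'}_{n'}\Leftrightarrow R^{n'}_{m'}\le R^n_m$, which holds because $L^m_n\dashv R^n_m$ and, in any thin category, $\Phi\le\Phi'$ is equivalent to $\Psi'\le\Psi$ whenever $\Phi\dashv\Psi$ and $\Phi'\dashv\Psi'$.
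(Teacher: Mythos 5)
Your argument is correct and, where the paper actually argues, takes the same route: your ``only if'' direction via images of odd cycles and Lemma~\ref{phcg}(i) is precisely the paper's one-line justification (``witnessed by suitably chosen circular complete graphs''). For the ``if'' direction the paper simply defers to \cite{HaT:Graph-powers}; your self-contained substitute --- the composition laws $\Gamma_{\T(ab)}=\Gamma_{\T(a)}\circ\Gamma_{\T(b)}$ and $\Lambda_{\T(ab)}=\Lambda_{\T(a)}\circ\Lambda_{\T(b)}$, the telescoping identities of Lemma~\ref{compid} yielding $L^{mk}_{nk}\leftrightarrow L^{m}_{n}$ and $R^{mk}_{nk}\leftrightarrow R^{m}_{n}$, and the monotonicity of $\Gamma_{\T(m)}$ in $m$ by padding a length-$m$ walk with back-and-forth steps --- is sound, and you correctly flag the two points that need care (that thin functors preserve homomorphic equivalence, so the telescoping may be performed inside a composite, and that the factorisation $\Omega_{\T(nk)}\leftrightarrow\Omega_{\T(k)}\circ\Omega_{\T(n)}$ must be taken in the order that puts $\Omega_{\T(k)}$ next to $\Gamma_{\T(k)}$). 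The only additional remark worth recording is that in the ``only if'' step $k$ must be taken large enough that neither $K_{s/r}$ nor $K_{s'/r'}$ degenerates (i.e.\ $2k+1$ exceeds both $\frac{m}{n}$ and $\frac{m'}{n'}$), which you have already built into your choice of $k$.
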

\begin{proof}
The implications $\frac{m}{n} \leq \frac{m'}{n'} \Rightarrow L^m_n \leq L^{m'}_{n'}$ 
and $R^m_n \leq R^{m'}_{n'}$
are proved in~\cite{HaT:Graph-powers}. The converse implications are witnessed by
suitably chosen circular complete graphs.
\end{proof}

Up to equivalence, we can assimilate $L^m_n$ and $R^m_n$ to functors 
$L_{m/n}$ and $R_{m/n}$ corresponding to the rational parameter $\frac{m}{n}$.
These are two chains of functors isomorphic to the positive rationals
with odd numerator and denominator. This order is in fact isomorphic to the rationals,
but our specific labelling leaves holes. In particular, the adjoint functors $L_{1/2}$
and $R_{2/1}$ are not defined. We next see that this specific pair of holes 
can be filled by Pultr functors.

\subsection{A limit functor} Consider the Pultr template 
$\mathcal{T}(2) = (\overline{P}_1,\allowbreak {P_1 \Box P_2},\allowbreak \epsilon_1, \epsilon_2)$
where $P_1$ and $P_2$ are path with vertex-sets $\{0,1\}$ and $\{0,1, 2\}$
respectively, $\Box$ is the Cartesian product of section~\ref{af}, and
$(\epsilon_1(0), \epsilon_1(1), \epsilon_2(0), \epsilon_2(1)) = 
((0,0),(1,0),(1,2),(0,2))$. Having $\overline{P}_1$ rather than $P_1$ 
as first coordinate of the template makes a difference only for graphs
with no edges. In all other cases, we could replace $\overline{P}_1$
by $P_1$ and get an equivalent functor.
\begin{prop}
For all $G \in \mathcal{G}$, we have
$L^m_n(G) \rightarrow \Lambda_{\mathcal{T}(2)}(G)$ when $\frac{m}{n} < \frac{1}{2}$,
and $\Lambda_{\mathcal{T}(2)}(G) \rightarrow L^m_n(G)$ when $\frac{m}{n} > \frac{1}{2}$;
as well as $R^n_m(G) \rightarrow \Gamma_{\mathcal{T}(2)}(G)$ when $\frac{n}{m} < 2$,
and $\Gamma_{\mathcal{T}(2)}(G) \rightarrow R^n_m(G)$ when $\frac{n}{m} > 2$.
\end{prop}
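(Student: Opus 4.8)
The plan is to prove the two statements about $\Lambda_{\T(2)}$ directly and to deduce the two statements about $\Gamma_{\T(2)}$ from them by a formal adjunction argument. First I would record that $\Lambda_{\T(2)}$ and $\Gamma_{\T(2)}$ are thin adjoints in $\mathcal{G}$: the $180^\circ$ rotation $(i,j)\mapsto(1-i,2-j)$ of the grid $P_1\Box P_2$ is an automorphism $q$ with $q\circ\epsilon_1=\epsilon_2$ and $q\circ\epsilon_2=\epsilon_1$, so the construction of the previous subsection applies and $\Lambda_{\T(2)}\adjoint\Gamma_{\T(2)}$; I would likewise use $L^m_n\adjoint R^n_m$. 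The formal fact I would invoke is that thin adjunction reverses the functor order: if $F\adjoint F'$ and $G\adjoint G'$, then ``$F(A)\to G(A)$ for all $A$'' is equivalent to ``$G'(B)\to F'(B)$ for all $B$'' (take $A=G'(B)$, compose the counit $G(G'(B))\to B$ with $F(A)\to G(A)$, and apply $F\adjoint F'$; the converse is symmetric). Applied with $F=L^m_n$, $G=\Lambda_{\T(2)}$, this turns ``$L^m_n(G)\to\Lambda_{\T(2)}(G)$'' into ``$\Gamma_{\T(2)}(G)\to R^n_m(G)$''; since $\frac{m}{n}<\frac{1}{2}$ is the same as $\frac{n}{m}>2$, this is exactly the fourth displayed statement, and the third follows the same way from the second. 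Thus everything reduces to the two claims about $\Lambda_{\T(2)}$.

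For those I would write out both functors explicitly. From the template, $\Lambda_{\T(2)}(G)$ has, for each vertex $u$, a pair $(u,0),(u,1)$, and, for each edge $\{u,v\}$, a copy of the $2\times 3$ grid on the six vertices $(u,0),(u,1)$, two internal vertices, $(v,0),(v,1)$, with the $\epsilon_2$-side attached with a ``crossing''; every edge of $\Lambda_{\T(2)}(G)$ lies inside one such grid gadget, the gadgets meeting only along the shared boundary pairs. On the other side, $L^m_n(G)=\Gamma_{\T(m)}(\Lambda_{\T(n)}(G))$ is obtained by subdividing each edge of $G$ into a path $u=x_0,x_1,\dots,x_n=v$ and then joining two vertices whenever a walk of length $m$ connects them; inside one subdivided edge this gives the gadget $\Gamma_{\T(m)}(P_n)$ (with $x_i\sim x_j$ precisely when $|i-j|\le m$ and $i\not\equiv j\pmod 2$), together with extra ``cross'' edges created by walks that pass through an original vertex of~$G$.

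With these pictures I would build the homomorphisms gadget-by-gadget: a map $\Lambda_{\T(2)}(G)\to L^m_n(G)$ is a homomorphism as soon as it is one on each grid and the boundary pairs are sent coherently. For $\frac{m}{n}<\frac{1}{2}$ I would fold the long subdivided path onto the three columns of the grid, sending $x_i$ to the left, middle, or right column according to the position of $i$ and to the row given by $i\bmod 2$; the inequality $\frac{m}{n}<\frac{1}{2}$ is exactly what forces a length-$m$ chord $x_ix_{i\pm m}$ to join grid-adjacent columns, so the fold is a homomorphism, while the cross edges near an original vertex land on the shared boundary pair. For $\frac{m}{n}>\frac{1}{2}$ I would instead place each grid gadget inside $L^m_n(G)$, sending $(u,0)$ to $u=x_0$ and $(u,1)$ to a suitable walk-distance-$m$ neighbour, and check that the six grid vertices occupy a closed hexagon of $\Gamma_{\T(m)}(P_n)$.

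The hard part will be the direction $\frac{m}{n}>\frac{1}{2}$, precisely because of the cross-vertex edges of $L^m_n(G)$. Already the case $m=n=1$, $G=K_3$ (where $L^1_1=\mathrm{id}$ and the required map $\Lambda_{\T(2)}(K_3)\to K_3$ is not any edge-by-edge assignment but a genuine proper $3$-colouring forced by the triangle) shows that the images of the pairs $(u,0),(u,1)$ cannot be chosen locally: they must be fixed once and for all at each vertex so as to extend over every incident grid simultaneously and to respect the edges that walks through $u$ add in $L^m_n(G)$. I would tame this either by a global orientation-and-labelling of $G$, or by invoking monotonicity (Proposition~\ref{pcaf}) together with the density of the odd/odd ratios to reduce to ratios $\frac{m}{n}$ arbitrarily close to $\frac{1}{2}$, where $\Gamma_{\T(m)}(P_n)$ is closest to the grid; establishing the existence of this coherent placement is the technical core of the argument.
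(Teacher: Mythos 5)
The paper itself omits the proof of this proposition (it only remarks that the argument ``is similar to many proofs involving the functors associated with the templates $\mathcal{T}(2k+1)$''), so your attempt has to stand on its own. Your formal scaffolding is sound: the rotation $(i,j)\mapsto(1-i,2-j)$ does witness the symmetry of the template, and the order-reversal of thin adjoints ($L\leq L'$ iff $R'\leq R$, via the counit) correctly reduces the two $\Gamma_{\mathcal{T}(2)}/R^n_m$ statements to the two $\Lambda_{\mathcal{T}(2)}/L^m_n$ statements, with $\frac{m}{n}<\frac12$ matching $\frac{n}{m}>2$. But the mathematical content is not there. You explicitly leave the direction $\frac{m}{n}>\frac12$ unproved: ``a global orientation-and-labelling'' is a name, not a construction, and the monotonicity-plus-density idea does not close the gap, because Proposition~\ref{pcaf} only propagates the bound \emph{upward} in $\frac{m}{n}$; since no odd/odd ratio attains the infimum $\frac12$, you would still need a uniform construction for an infinite family of base cases arbitrarily close to $\frac12$, which is exactly the original problem. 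So the fourth (and, dually, the third) assertion of the proposition is not established.

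The ``easy'' direction $\frac{m}{n}<\frac12$ is also not as local as your sketch suggests. The gadget for an edge $\{u,v\}$ is a $6$-cycle $(u,0)\,m_0\,(v,1)\,(v,0)\,m_1\,(u,1)$ with the chord $m_0m_1$; its only non-adjacent cross-bipartition pairs are $\{(u,0),(v,0)\}$ and $\{(u,1),(v,1)\}$. If you fold $x_0,\dots,x_n$ by sending $x_i$ to the $u$-column for $i\leq a$, to $m_{(i+1)\bmod 2}$ for $a<i<n-a$, and to the $v$-column for $i\geq n-a$, then two constraints fight each other: avoiding a length-$m$ chord between the two end columns forces $n-2a>m$, while preventing a through-$u$ walk of length $\leq m$ from joining two \emph{middle} vertices of different gadgets (which are never adjacent) forces $2a\geq m-1$. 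The naive choice ``$a$ determined by the position of $i$'' (e.g.\ $a=m$) already fails at $n=2m+1$, where $x_m\mapsto(u,1)$ and $x_{m+1}\mapsto(v,1)$ would have to be adjacent but are not. The window $\frac{m-1}{2}\leq a<\frac{n-m}{2}$ is nonempty precisely because $2m<n$ with $m,n$ odd, so this direction is fixable, but the threshold analysis is the actual proof and is absent from your write-up. For the hard direction I would suggest transposing to the right-adjoint side and using the explicit descriptions of $\Omega_{\mathcal{T}(m)}$ and $\Gamma_{\mathcal{T}(2)}$ (a map from tuples of sets to ordered pairs of vertices), in the style of the $\mathcal{T}(2k+1)$ arguments of Hajiabolhassan and Taherkhani that the paper cites; as it stands, the proposal proves roughly half of the proposition.
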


We omit the proof, which is similar to many proofs involving the
functors associated with the templates $\mathcal{T}(2k+1)$.
The main point is that $\Lambda_{\mathcal{T}(2)}$ and 
$\Gamma_{\mathcal{T}(2)}$ fulfill the role
of $L_{1/2}$ and $R_{2/1}$. It is not clear which of the other rational 
holes in the chains $\{L_{m/n}\}_{m, n \mbox{ \small odd}}$ and 
$\{R_{n/m}\}_{n, m \mbox{ \small odd}}$ can be filled, and how. 
It can be shown that the irrational holes cannot be filled by 
finite constructions. Thus, a workable theory of convergence
for sequences of thin functors remains to be developed.

One interesting aspect of the template  $\mathcal{T}(2)$ is that
$\Gamma_{\mathcal{T}(2)}$ has a {\em partial} right adjoint.
Let $\Omega_{\mathcal{T}(2)}(G)$ be the graph defined as follows.
\begin{itemize}
\item The vertices of $\Omega_{\mathcal{T}(2)}(G)$ are the
ordered pairs $(A,B) \subseteq V(G)^2$ such that every
vertex of $A$ is adjacent to every vertex of $B$ (i.e.,
$A$ and $B$ are {\em completely joined}).
\item The edges of $\Omega_{\mathcal{T}(2)}(G)$
are the pairs $\{(A,B),(C,D)\}$ such that
$A$ and $C$ are completely joined, $B$ and $D$ are completely joined,
$A \cap D \neq \emptyset$, and  $B \cap C \neq \emptyset$.
\end{itemize}
\begin{prop}
\begin{itemize}
\item[]
\item[(i)] If $\Gamma_{\mathcal{T}(2)}(G) \rightarrow H$,
then $G \rightarrow \Omega_{\mathcal{T}(2)}(H)$;
\item[(ii)] if $G \rightarrow \Omega_{\mathcal{T}(2)}(K_3)$,
then $\Gamma_{\mathcal{T}(2)}(G) \rightarrow K_3$;
\item[(iii)] however, $K_6 \rightarrow \Gamma_{\mathcal{T}(2)}(\Omega_{\mathcal{T}(2)}(K_4))
\not \rightarrow K_4$, hence $\Omega_{\mathcal{T}(2)}$ is not a right 
adjoint of $\Gamma_{\mathcal{T}(2)}$.
\end{itemize}
\end{prop}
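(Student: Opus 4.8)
For part (i) I would write down the homomorphism $G \rightarrow \Omega_{\mathcal{T}(2)}(H)$ explicitly, mimicking the adjunction for $\Omega_{\mathcal{T}(3)}$. A vertex of $\Gamma_{\mathcal{T}(2)}(G)$ is an arbitrary ordered pair $(p,q)\in V(G)^2$, and $(a,d)$ is adjacent to $(f,c)$ exactly when the seven edges of a $2\times 3$ grid with top row $(a,b,c)$ and bottom row $(d,e,f)$ are present in $G$ for some $b,e$. Given $\phi\colon\Gamma_{\mathcal{T}(2)}(G)\rightarrow H$, I would set $\psi(w)=(A_w,B_w)$ where $A_w=\{\phi(w,y):y\in N(w)\}$ and $B_w=\{\phi(y,w):y\in N(w)\}$. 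Each requirement then reduces to exhibiting one small grid in $G$: the pair $(A_w,B_w)$ is a legal vertex because $(w,y)$ is adjacent to $(z,w)$ through the grid with rows $(w,y,w)$ and $(y,w,z)$, so their $\phi$-images are adjacent in $H$ (hence $A_w$ and $B_w$ are completely joined); and for an edge $\{w,w'\}$ of $G$ the two intersection conditions are witnessed by the common colours $\phi(w,w')\in A_w\cap B_{w'}$ and $\phi(w',w)\in B_w\cap A_{w'}$, while the two complete-join conditions follow from the adjacencies $(w,y)\sim(w',y')$ and $(y,w)\sim(y',w')$, certified by grids with middle columns $(w',w)$ and $(w,w')$ respectively. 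This part is routine.

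For part (ii) I would directly $3$-colour $\Gamma_{\mathcal{T}(2)}(G)$ from a homomorphism $\psi\colon G\rightarrow\Omega_{\mathcal{T}(2)}(K_3)$, $w\mapsto(A_w,B_w)$ into ordered pairs of disjoint subsets of $\{0,1,2\}$. A non-isolated vertex $(p,q)$ of $\Gamma_{\mathcal{T}(2)}(G)$ has $\{p,q\}\in E(G)$, so the edge conditions give $A_p\cap B_q\neq\emptyset$, and I would colour it by some $\phi(p,q)\in A_p\cap B_q$. By functoriality of $\Gamma_{\mathcal{T}(2)}$ (monotonicity for $\rightarrow$) this amounts to the single finite assertion $\Gamma_{\mathcal{T}(2)}(\Omega_{\mathcal{T}(2)}(K_3))\rightarrow K_3$, since every vertex $(A,B)$ of $\Omega_{\mathcal{T}(2)}(K_3)$ with $A=\emptyset$ or $B=\emptyset$ is isolated and only twelve vertices carry edges. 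It then remains to check that grid-adjacent endpoints $(a,d)$ and $(f,c)$ receive distinct colours.

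I expect this verification to be the crux of the whole proposition. Unlike part (i) it is not formal, and it must use the size of $\{0,1,2\}$ in an essential way, since part (iii) shows the same statement fails for $K_4$. The delicate point is that $A_p\cap B_q$ may contain two elements, so the representatives have to be chosen coherently and tested against every grid edge; I would settle this by a short case analysis on the few vertex-types $(A,B)$ of the twelve-vertex core.

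For part (iii) I would run the counit argument backwards. Were $\Omega_{\mathcal{T}(2)}$ a right adjoint of $\Gamma_{\mathcal{T}(2)}$, the identity of $\Omega_{\mathcal{T}(2)}(K_4)$ would transpose to a counit $\Gamma_{\mathcal{T}(2)}(\Omega_{\mathcal{T}(2)}(K_4))\rightarrow K_4$. To defeat this I would describe $\Omega_{\mathcal{T}(2)}(K_4)$ (vertices: ordered pairs of disjoint subsets of $\{0,1,2,3\}$) and then exhibit six vertices of $\Gamma_{\mathcal{T}(2)}(\Omega_{\mathcal{T}(2)}(K_4))$, that is, six ordered pairs of $\Omega_{\mathcal{T}(2)}(K_4)$-vertices, that are pairwise adjacent, each adjacency certified by a grid witness living in $\Omega_{\mathcal{T}(2)}(K_4)$; the symmetric $(2,2)$-type vertices $(\{i,j\},\{k,l\})$ are the natural place to look. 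This produces $K_6\rightarrow\Gamma_{\mathcal{T}(2)}(\Omega_{\mathcal{T}(2)}(K_4))$, hence $\chi\bigl(\Gamma_{\mathcal{T}(2)}(\Omega_{\mathcal{T}(2)}(K_4))\bigr)\geq 6>4$ and $\Gamma_{\mathcal{T}(2)}(\Omega_{\mathcal{T}(2)}(K_4))\not\rightarrow K_4$, contradicting the counit and proving that $\Omega_{\mathcal{T}(2)}$ is not a right adjoint of $\Gamma_{\mathcal{T}(2)}$. Exhibiting the six pairs together with their grid witnesses is a finite but fiddly search.
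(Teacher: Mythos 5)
The paper itself omits the proof of this proposition (``we again omit the standard proof for the sake of briefness''), so there is no written argument to compare against; judged on its own, your plan is correct and complete for (i) and (iii), and stops one step short for (ii). For (i), the grid witnesses you name do work: the rows $(w,y,w)$ and $(y,w,z)$ give the adjacency needed for $(A_w,B_w)$ to be a legal vertex, and the middle columns $(w',w)$ and $(w,w')$ certify the two complete-join conditions along an edge $\{w,w'\}$. For (iii), the six vertices $\bigl((S,\overline{S}),(\overline{S},S)\bigr)$, $S$ a $2$-subset of $V(K_4)$, are indeed pairwise adjacent in $\Gamma_{\mathcal{T}(2)}(\Omega_{\mathcal{T}(2)}(K_4))$: for $T=\overline{S}$ one may take the middle column to repeat the outer columns, and for $|S\cap T|=1$ the middle column is forced to the pair of singletons $\bigl(\overline{S}\cap T, S\cap\overline{T}\bigr)$, $\bigl(S\cap\overline{T},\overline{S}\cap T\bigr)$, whose seven grid edges all hold; the counit contradiction then finishes the argument.

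The gap is in (ii), exactly where you flag it. The reduction to the single assertion $\Gamma_{\mathcal{T}(2)}(\Omega_{\mathcal{T}(2)}(K_3))\rightarrow K_3$ is fine, but the ``short case analysis'' is the entire mathematical content of (ii) and is not carried out. The danger is real: the three vertices $\alpha_S=\bigl((S,\{t\}),(\{t\},S)\bigr)$ of the core ($S$ a $2$-subset of $\{0,1,2\}$, $t$ its complement) are exactly those with $|A_p\cap B_q|=2$, they are pairwise adjacent in $\Gamma_{\mathcal{T}(2)}$ of the core, and choosing the common element of $S\cap S'$ for two of them produces a monochromatic edge. To close the argument you need the key computation: if a colour $x$ lies in both $A_a\cap B_d$ and $A_f\cap B_c$ for the two ends of a grid, then the disjointness constraints along the rows force the middle column to consist of singletons partitioning $\{0,1,2\}\setminus\{x\}$, which in turn forces both ends to be ambiguous vertices $\alpha_S,\alpha_{S'}$ with $S\neq S'$. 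Hence any system of distinct representatives $c(S)\in S$ for the three $2$-subsets, with arbitrary choices at the forced vertices, is a proper $3$-colouring. Alternatively---and this is presumably the ``standard proof'' the authors have in mind, given the surrounding text---identify $\Omega_{\mathcal{T}(2)}(K_3)$ with $K_{12/5}$ and verify $\Gamma_{\mathcal{T}(2)}(K_{12/5})\leftrightarrow K_3$ by the usual winding argument for circular complete graphs, which replaces your case analysis by a one-line computation on circular parameters.
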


We again omit the standard proof for the sake of briefness.
We note that the result allows the derivation of the multiplicativity of 
$\Omega_{\mathcal{T}(2)}(K_3) = K_{12/5}$
directly from that of $K_3$, instead of through density arguments as in
the proof of Proposition~\ref{moccg}. The range of graphs 
on which $\Omega_{\mathcal{T}(2)}$ acts as a right adjoint of 
$\Gamma_{\mathcal{T}(2)}$ is not precisely known, but it extends
to the odd cycles and allows to reprove the multiplicativity
of some circular complete graphs. 

In short, adjoint functors allow the extension of the proof of multiplicativity
of $K_3$ to the circular complete graphs $K_{s/r}, 2 < \frac{s}{r} < 4$,
sometimes in many ways, using compositions, limits and partial right adjoints.
However,
the multiplicativity of $K_4$, the ``next case of Hedetniemi's
conjecture'' remains open. We now present a difficulty in using the adjoint functors
method to derive the multiplicativity of other complete graphs from that of~$K_3$.

\subsection{Obstacle: A stronger form of multiplicativity}

El-Zahar and Sauer actually proved a stronger form of Theorem~\ref{k3mul}:
\begin{theorem}[\cite{EZaSau:Chro}] \label{k3strmul}
Let $G$ and $H$ be connected graphs containing odd cycles 
$G', H'$ respectively. Let $L$ be the subgraph of 
$G\times H$ induced by $V({G\times H'}) \cup V({G'\times H})$. 
If $L \rightarrow K_3$, then $G \rightarrow K_3$ or $H \rightarrow K_3$.
\end{theorem}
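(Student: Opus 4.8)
The plan is to argue directly: from a proper $3$-colouring $c\colon V(L)\to\mathbb Z_3$ I will construct a homomorphism $G\to K_3$ or $H\to K_3$. Throughout I identify $K_3$ with $\mathbb Z_3$ and record, for every edge $xy$ of $L$, the sign $s(x,y)=c(y)-c(x)\in\{+1,-1\}$ (the nonzero residues), which is antisymmetric. For a closed walk $W$ in $L$ set $\omega(W)=\sum s$ over its steps; since $c$ returns to its starting value, $\omega(W)\equiv 0\pmod 3$, and if $W$ has odd length then $\omega(W)$ is odd, hence nonzero. Write $G'=a_0\cdots a_{2m}a_0$ and $H'=b_0\cdots b_{2k}b_0$. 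Since $G$ and $H$ are connected and non-bipartite, odd closed walks through any prescribed vertex exist in each.

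The crux is a rigidity statement on the product $G'\times H'$ of the two odd cycles. For each edge $aa'$ of $G$, the vertices $\{a,a'\}\times V(H')$ span in $L$ a single even cycle $Z_{aa'}$ of length $2(2k+1)$: because $H'$ is odd, the alternation $(a,b_0),(a',b_1),(a,b_2),\dots$ closes only after two laps of $H'$ (this is the identity $K_2\times C_{2k+1}\cong C_{2(2k+1)}$). Put $w(aa')=\omega(Z_{aa'})$. A short computation exhibits two extreme behaviours: if $c$ is pulled back from the first coordinate on $\{a,a'\}\times V(H')$ then each fibre is monochromatic and the signs cancel in pairs, so $w(aa')=0$; if instead $c$ is pulled back from the second coordinate then $Z_{aa'}$ traces $H'$ twice and $w(aa')$ equals twice the nonzero winding of the induced colouring of $H'$. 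The heart of the argument is the dichotomy that these edge-windings cannot genuinely mix: reading $w$ around the odd cycle $G'$ and comparing with the winding of $c$ in the first coordinate (nonzero because $G'$ is odd) must force either $w(aa')=0$ for every edge of $G'$, so that $c$ restricted to $G'\times H'$ factors through the projection to $G'$, or the symmetric conclusion for $H'$. I expect this rigidity on $G'\times H'$, and the exclusion of mixed configurations, to be the main obstacle; naive local propagation fails, since one fibre of $Z_{aa'}$ being monochromatic does not by itself force the opposite fibre monochromatic, so the vanishing of $w$ must be tracked as a global, winding-theoretic invariant rather than a pointwise one.

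With the dichotomy in hand, say $c$ on $G'\times H'$ is pulled back from $G'$, I would propagate across $L$ using connectivity. Define $\gamma(a)=c(a,b_0)$ for $a\in V(G)$. Working inside $G\times H'$, I would show $w(aa')=0$ for every edge of $G$ by a winding-consistency relation: as the edge $aa'$ moves through the connected graph $G$, any change in $w$ can be detected along an odd closed walk of $G$ and would translate into an odd closed walk in $H$ of zero winding under $c$, which is impossible. Hence every fibre $\{a\}\times V(H')$ is monochromatic with value $\gamma(a)$. Properness is then immediate: an edge $aa'$ of $G$ together with any edge $b_jb_{j+1}$ of $H'$ gives $(a,b_j)\sim(a',b_{j+1})$, whence $\gamma(a)\neq\gamma(a')$, so $\gamma\colon G\to K_3$. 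The symmetric branch, using $G'\times H$ and connectivity of $H$, yields $H\to K_3$, completing the proof.
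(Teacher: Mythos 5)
The paper does not actually prove Theorem~\ref{k3strmul}; it quotes it from El-Zahar and Sauer \cite{EZaSau:Chro}, so your proposal has to be measured against their argument. Your winding-number formalism (signs $s(x,y)=c(y)-c(x)$, total winding divisible by $3$ around closed walks, odd hence nonzero around odd closed walks) is exactly the right bookkeeping, and it is the language in which the El-Zahar--Sauer proof is usually presented. But the proposal stops where the theorem actually lives. The ``dichotomy'' you announce --- that the edge-windings $w(aa')$ on $G'\times H'$ ``cannot genuinely mix'', so that either all of them vanish or the symmetric statement holds for $H'$ --- \emph{is} the content of the theorem, and you explicitly defer it (``I expect this rigidity \dots{} to be the main obstacle''). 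Nothing in the proposal establishes it. The engine of El-Zahar and Sauer's argument is a local lemma you never state: if $gg'$ is an edge of $G$ and $hh'$ an edge of $H$ with $c(g,h)=c(g,h')=\alpha$, then both $c(g',h)$ and $c(g',h')$ lie in the two-element set $\{0,1,2\}\setminus\{\alpha\}$, because $(g',h)\sim(g,h')$ and $(g',h')\sim(g,h)$ in the product. Iterating this around the odd cycles $H'$ and $G'$ is what yields the invariance of the winding along edges and hence the exclusion of mixed configurations; without it the global rigidity is an assertion, not a proof.

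Two further steps fail even if the dichotomy is granted. First, you infer from $w(aa')=0$ that the fibres $\{a\}\times V(H')$ and $\{a'\}\times V(H')$ are monochromatic; that is the converse of the implication you checked, and it is false: the even cycle $C_{2(2k+1)}$ has $3$-colourings of winding zero whose two bipartition classes are not both monochromatic (e.g.\ the pattern $0,1,0,1,0,2,0,2,0,2$ on $C_{10}$). Relatedly, the conclusion of the theorem is only that \emph{some} $3$-colouring of $G$ exists, not that $c$ itself is constant on fibres, so the colouring of $G$ must be manufactured (in El-Zahar--Sauer essentially from the winding data), not read off a fibre. Second, ``the winding of $c$ in the first coordinate along $G'$'' is not defined: $G'\times\{b\}$ spans no edges of the product, so $c$ does not restrict to a colouring of $G'$, and $C_{2m+1}\times C_{2k+1}$ is $4$-regular rather than a union of cycles, so the comparison you want has to be carried out along closed walks of the product winding once around $G'$ and some number of times around $H'$ --- which is again exactly the missing lemma. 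As it stands the proposal is a plausible plan whose decisive step, and whose extraction of the final colouring, are both absent.
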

They conjectured that a similar strong form of multiplicativity
would hold for all complete graphs. However this conjecture
was refuted in~\cite{TarZhu:On-Hedetniemis}. The counterexamples
are of interest to us. Let $G_{m,n}$ be the graph obtained
by identifying one end of $P_n$ to a vertex of $K_{7/2}$
and the other end to a vertex of $K_m$. 
For $m >4$, $G_{m,n}\times G_{m,n} \leftrightarrow G_{m,n} \not \rightarrow K_4$.
Let $L_{m,n}$ be the subgraph of $G_{m,n} \times G_{m,n}$ induced
by $V(G_{m,n} \times K_{7/2}) \cup V(K_{7/2}\times G_{m,n})$.
\begin{theorem}[\cite{TarZhu:On-Hedetniemis}]
For $n \geq 3$ and $m$ arbitrary, $L_{m,n} \rightarrow K_4$.
\end{theorem}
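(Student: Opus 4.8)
The plan is to exhibit an explicit homomorphism $h\colon L_{m,n}\to K_4$. Write $C$ for the copy of $K_{7/2}$ inside $G_{m,n}$ and $M$ for the copy of $K_m$, let $p_0\in V(C)$ and $p_n\in V(M)$ be the two endpoints of the attached path $P_n$, and put $Y=V(G_{m,n})\setminus V(C)$. Since $\chic(K_{7/2})=\tfrac{7}{2}$ and $\chic\le\chi<\chic+1$, we have $\chi(K_{7/2})=4$, so I fix a $4$-colouring $c\colon C\to K_4$. First I would record the coarse structure of $L_{m,n}$: by definition it is the union of $C\times G_{m,n}$ and $G_{m,n}\times C$, overlapping in $C\times C$, and the only edge of $G_{m,n}$ joining $V(C)$ to $Y$ is $p_0p_1$. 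Consequently the only edge of $L_{m,n}$ running between the two ``private'' parts $C\times Y$ and $Y\times C$ is the single crossing edge $(p_0,p_1)\sim(p_1,p_0)$.

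Next I would colour the ``deep'' regions, where one coordinate lies in $M$. On $M\times C$ (and symmetrically on $C\times M$) I set $h$ to be the circular coordinate, $h(x,y)=c(y)$; this is proper no matter how large $m$ is, because $\chi(K_m\times C)\le\chi(C)=4$, so the large clique $M$ is absorbed and never forces a fifth colour. The whole difficulty is therefore confined to the \emph{central} part of $L_{m,n}$ around $C\times C$ together with the two copies of the path, where the colouring must change from being governed by the first coordinate to being governed by the second. A pure projection colouring cannot be glued here: colouring $C\times G_{m,n}$ by $c\circ\pi_1$ and $G_{m,n}\times C$ by $c\circ\pi_2$ forces the same colour $c(p_0)$ on the two ends of the crossing edge and is self-contradictory on $C\times C$. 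Instead I would colour $C\times C$ by a genuinely two-dimensional $4$-colouring and then propagate it outwards along the columns $\{p_j\}\times C$ (and rows $C\times\{p_j\}$), interpolating between this central colouring and the uniform $c\circ\pi$ already chosen on the $M$-regions.

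The hard part will be exactly this interpolation across the centre. Each column $\{p_j\}\times C$ is an independent set, since $p_j$ carries no loop, so a column is constrained only through its neighbouring columns and, for $j=1$, through the overlap $C\times C$ and the crossing edge. The task is to produce a chain of colourings of $C$ realized on the successive columns that begins compatibly with the central colouring near $p_0,p_1$ and ends at the colouring $c$ matching the $M$-region; the path must be long enough to absorb this change, which is where the hypothesis $n\ge 3$ enters, and I expect the change to be feasible only because $K_{7/2}$ is barely $4$-chromatic ($\tfrac{7}{2}<4$), leaving the slack needed to turn the colouring. This central adjustment, where the specific arithmetic of $\overline{C_7}\cong K_{7/2}$ must be matched against the length~$3$, is the delicate step I would expect to consume most of the work.

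Finally I would verify that $h$ is a homomorphism by checking the edge types separately: edges inside a single region, edges between consecutive path-columns, edges joining $C\times C$ to the two private parts, and the lone crossing edge $(p_0,p_1)\sim(p_1,p_0)$. All but the central ones are routine consequences of the properness of $c$ and of $c\circ\pi$, and the remaining ones reduce to the explicit chain of column-colourings constructed above. Since $L_{m,n}$ then admits a homomorphism to $K_4$ while $G_{m,n}\not\to K_4$, this yields the desired counterexample to the strong form of multiplicativity for $K_4$.
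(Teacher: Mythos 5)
The paper does not prove this theorem; it only cites it from the Tardif--Zhu paper, so there is no internal proof to compare against. Judged on its own terms, your proposal is a correct \emph{strategy outline} with the essential step missing. You correctly set up the structure of $L_{m,n}$ (the decomposition into $C\times G_{m,n}$ and $G_{m,n}\times C$ overlapping in $C\times C$, the single crossing edge $(p_0,p_1)\sim(p_1,p_0)$, the independence of the columns $\{p_j\}\times C$), and you correctly identify why the naive gluing of the two projection colourings $c\circ\pi_1$ and $c\circ\pi_2$ fails on that crossing edge. But the entire mathematical content of the theorem is the construction you defer: an explicit $4$-colouring of $K_{7/2}\times K_{7/2}$ that can be propagated outward along the two path-strips so as to (a) respect the crossing edge and (b) relax, within $n\ge 3$ columns, to the projection colouring used on the $K_m$-regions. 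You never write down this central colouring, never exhibit the chain of column-colourings, and never show where $n\ge 3$ (as opposed to $n\ge 1$ or $n\ge 17$) and the specific arithmetic of $K_{7/2}=\overline{C_7}$ enter. ``I expect the change to be feasible because $\frac{7}{2}<4$'' is a heuristic, not an argument; indeed the whole point of the theorem is that this feasibility is surprising (it is what refutes the El-Zahar--Sauer strong multiplicativity conjecture for $K_4$), so it cannot be waved through.

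Concretely, to close the gap you would need: an explicit map $h$ on $C\times C$ using both coordinates (in the original Tardif--Zhu argument this exploits the cyclic structure of $\mathbb{Z}_7$), a verification that $h$ is proper on $C\times C$, a list of the colourings assigned to the columns $\{p_1\}\times C,\dots,\{p_n\}\times C$ (and symmetrically the rows) showing that consecutive columns are compatible and that column $n$ matches $c\circ\pi_2$ on $M\times C$, and a check of the crossing edge. Without these, what you have is a correct reduction of the problem to its hard kernel, not a proof.
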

Now suppose that for some $\mathcal{T}$, $\Gamma_{\mathcal{T}}$ admits a right adjoint
$\Omega_{\mathcal{T}}$ such that $\Omega_{\mathcal{T}}(K_3) = K_4$.
Then for $n \geq 3, m \geq 5$, we have $G_{m,n} \not \rightarrow K_4$ whence
$\Gamma_{\mathcal{T}}(G_{m,n}) \not \rightarrow K_3$.
However from $L_{m,n} \rightarrow K_4 = \Omega_{\mathcal{T}}(K_3)$, we get
$\Gamma_{\mathcal{T}}(L_{m,n}) \rightarrow K_3$. 
However $\Gamma_{\mathcal{T}}(L_{m,n})$ contains
a copy of $\Gamma_{\mathcal{T}}(G_{m,n}) \times \Gamma_{\mathcal{T}}(K_{7/2}) 
\cup \Gamma_{\mathcal{T}}(K_{7/2})\times \Gamma_{\mathcal{T}}(G_{m,n})$.
Hence by Theorem~\ref{k3strmul}, since 
$\Gamma_{\mathcal{T}}(G_{m,n}) \not \rightarrow K_3$,
we must have that $\Gamma_{\mathcal{T}}(K_{7/2})$ is bipartite. 
It can be shown that this is incompatible with the fact
that  $\Gamma_{\mathcal{T}}$ admits a right adjoint.

In short, the right adjoints we know seem to transfer a property
that is stronger than multiplicativity, and this seems to be
an obstacle in deriving the multiplicativity of other complete
graphs from that of~$K_3$.

We note that Wrochna~\cite{wrochna} has recently proved that all
square-free graphs are multiplicative.
The functors detailed here can therefore be used to derive from
this the multiplicativity of a larger class of graphs.
However, Wrochna states that his method also yields the stronger form of
multiplicativity for square-free graphs.
Hence the multiplicativity of larger complete graphs still seems out of reach.

\section{Concluding comments} \label{cc}
We have shown how some important facts concerning the weak Hedetniemi
conjecture and the characterization of multiplicative graphs are connected
to adjoint functors in the thin category of graphs. In addition,
the multi-factor version of the weak Hedetniemi conjecture
can also be tackled through adjoint functors, 
as shown in~\cite{FonNesTar:Interlacing}.
The characterization of central Pultr functors which admit right adjoints
is initiated in \cite{FonTar:Right-adjoints}. However, as we have seen,
further developments may depend on functors that escape the mould of Pultr
functors. In short, the tools devised so far in this line of study 
are categorial in nature, and seem to have reached their limit in their
present form. Perhaps the contribution of category theory to Hedetniemi's
conjecture would be to see the way to sharpen these tools.

\providecommand{\bysame}{\leavevmode\hbox to3em{\hrulefill}\thinspace}
\providecommand{\MR}{\relax\ifhmode\unskip\space\fi MR }
\providecommand{\MRhref}[2]{%
  \href{http://www.ams.org/mathscinet-getitem?mr=#1}{#2}
}
\providecommand{\href}[2]{#2}


\begin{thebibliography}{10}

\bibitem{AvaKayRei:The-chromatic}
Christian Avart, Bill Kay, Christian Reiher, and Vojt{\v e}ch R{\"o}dl,
  \emph{The chromatic number of finite type-graphs}, arXiv: 1603.05134v1, 20
  pages, 2016.

\bibitem{AvaLuczRod:On-generalized}
Christian Avart, Tomasz {\L}uczak, and Vojt{\v e}ch R{\"o}dl, \emph{On
  generalized shift graphs}, Fund. Math. \textbf{226} (2014), 173--199.

\bibitem{BoHe:star}
J.~A. Bondy and Pavol Hell, \emph{A note on the star chromatic number}, J.
  Graph Theory \textbf{14} (1990), no.~4, 479--482.

\bibitem{DocSch:Topology}
Anton Dochtermann and Carsten Schultz, \emph{Topology of {H}om complexes and
  test graphs for bounding chromatic number}, Israel J. Math. \textbf{187}
  (2012), no.~1, 371--417.

\bibitem{EZaSau:Chro}
Mohamed El-Zahar and Norbert Sauer, \emph{The chromatic number of the product
  of two 4-chromatic graphs is 4}, Combinatorica \textbf{5} (1985), no.~2,
  121--126.

\bibitem{FonNesTar:Interlacing}
Jan Foniok, Jaroslav Ne{\v s}et{\v r}il, and Claude Tardif, \emph{Interleaved
  adjoints of directed graphs}, European J. Combin. \textbf{32} (2011), no.~7,
  1018--1024.

\bibitem{FonTar:Adjoint}
Jan Foniok and Claude Tardif, \emph{Adjoint functors and tree duality},
  Discrete Math. Theor. Comput. Sci. \textbf{11} (2009), no.~2, 97--110.

\bibitem{FonTar:Right-adjoints}
\bysame, \emph{Digraph functors which admit both left and right adjoints},
  Discrete Math. \textbf{338} (2015), no.~4, 527--535.

\bibitem{GellerStahl}
Dennis Geller and Saul Stahl, \emph{The chromatic number and other functions of
  the lexicographic product}, J. Combin. Theory Ser. B \textbf{19} (1975),
  no.~1, 87--95.

\bibitem{GyaJenSti:On-graphs-with}
Andr{\'a}s Gy{\'a}rf{\'a}s, Tommy~R. Jensen, and Michael Stiebitz, \emph{On
  graphs with strongly independent color-classes}, J. Graph Theory \textbf{46}
  (2004), no.~1, 1--14.

\bibitem{HHMN:Mult}
Roland H{\"a}ggkvist, Pavol Hell, Donald~J. Miller, and Victor Neumann~Lara,
  \emph{On multiplicative graphs and the product conjecture}, Combinatorica
  \textbf{8} (1988), no.~1, 63--74.

\bibitem{Haj:On-colorings}
Hossein Hajiabolhassan, \emph{On colorings of graph powers}, Discrete Math.
  \textbf{309} (2009), no.~13, 4299--4305.

\bibitem{HaT:Graph-powers}
Hossein Hajiabolhassan and Ali Taherkhani, \emph{Graph powers and graph
  homomorphisms}, Electron. J. Combin. \textbf{17} (2010), R17, 16 pages.

\bibitem{HarEnt:Arc-colorings}
C.~C. Harner and Roger~C. Entringer, \emph{Arc colorings of digraphs}, J.
  Combinatorial Theory Ser. B \textbf{13} (1972), 219--225.

\bibitem{Hed:Con}
Stephen~Travis Hedetniemi, \emph{Homomorphisms of graphs and automata},
  University of Michigan Technical Report 03105-44-T, University of Michigan,
  1966.

\bibitem{HelNes:Dicho}
Pavol Hell and Jaroslav Ne{\v s}et{\v r}il, \emph{On the complexity of
  {$H$}-coloring}, J. Combin. Theory Ser. B \textbf{48} (1990), no.~1, 92--110.

\bibitem{HelNes:GrH}
\bysame, \emph{Graphs and homomorphisms}, Oxford Lecture Series in Mathematics
  and Its Applications, vol.~28, Oxford University Press, 2004.

\bibitem{ImrKla:GrP}
Wilfried Imrich and Sandi Klav{\v z}ar, \emph{Product graphs: Structure and
  recognition}, Wiley Series in Discrete Mathematics and Optimization,
  Wiley-Interscience, New York, 2000.

\bibitem{Lov:Knesers}
L{\'a}szl{\'o} Lov{\'a}sz, \emph{Kneser's conjecture, chromatic number, and
  homotopy}, J. Combin. Theory Ser. A \textbf{25} (1978), no.~3, 319--324.

\bibitem{Pol:cdia}
Svatopluk Poljak, \emph{Coloring digraphs by iterated antichains}, Comment.
  Math. Univ. Carolin. \textbf{32} (1991), no.~2, 209--212.

\bibitem{PolRod:Arc}
Svatopluk Poljak and Vojt{\v e}ch R{\"o}dl, \emph{On the arc-chromatic number
  of a digraph}, J. Combin. Theory Ser. B \textbf{31} (1981), no.~2, 190--198.

\bibitem{Pul:The-right-adjoints}
Ale{\v s} Pultr, \emph{The right adjoints into the categories of relational
  systems}, Reports of the Midwest Category Seminar, IV (Berlin), Lecture Notes
  in Mathematics, vol. 137, Springer, 1970, pp.~100--113.

\bibitem{Tar:Mul}
Claude Tardif, \emph{Multiplicative graphs and semi-lattice endomorphisms in
  the category of graphs}, J. Combin. Theory Ser. B \textbf{95} (2005), no.~2,
  338--345.

\bibitem{TarWeh:prf}
Claude Tardif and David Wehlau, \emph{Chromatic numbers of products of graphs:
  the directed and undirected versions of the {P}oljak--{R}{\"o}dl function},
  J. Graph Theory \textbf{51} (2006), no.~1, 33--36.

\bibitem{TarZhu:On-Hedetniemis}
Claude Tardif and Xuding Zhu, \emph{On {H}edetniemi's conjecture and the colour
  template scheme}, Discrete Math. \textbf{253} (2002), 77--85.

\bibitem{wrochna}
Marcin Wrochna, \emph{Square-free graphs are multiplicative}, arXiv:
  1601.04551v1, 21 pages, 2016.

\bibitem{Zhu:cirsur}
Xuding Zhu, \emph{Circular chromatic number: a survey}, Discrete Math.
  \textbf{229} (2001), 371--410.

\end{thebibliography}
\end{document}